\let\originalleft\left
\let\originalright\right
\renewcommand{\left}{\mathopen{}\mathclose\bgroup\originalleft}
\renewcommand{\right}{\aftergroup\egroup\originalright}
\begin{document}

\def\cB{\mathcal{B}}
\def\cL{\mathcal{L}}
\def\cM{\mathcal{M}}
\def\cN{\mathcal{N}}
\def\cO{\mathcal{O}}
\def\cW{\mathcal{W}}
\def\cX{\mathcal{X}}
\def\cY{\mathcal{Y}}
\def\rD{{\rm D}}
\def\ee{\varepsilon}
\def\halfOfACommaSpace{\hspace{.3mm}}
\def\manualEndProof{\hfill $\Box$}

\newcommand{\removableFootnote}[1]{}

\newtheorem{theorem}{Theorem}[section]
\newtheorem{corollary}[theorem]{Corollary}
\newtheorem{lemma}[theorem]{Lemma}
\newtheorem{proposition}[theorem]{Proposition}

\theoremstyle{definition}
\newtheorem{definition}{Definition}[section]
\newtheorem{assumption}[definition]{Assumption}
\newtheorem{example}[definition]{Example}

\theoremstyle{remark}
\newtheorem{remark}{Remark}[section]



\title{
Unfolding codimension-two subsumed homoclinic connections in two-dimensional piecewise-linear maps.
}
\author{
D.J.W.~Simpson\\\\
School of Fundamental Sciences\\
Massey University\\
Palmerston North\\
New Zealand
}
\maketitle



\begin{abstract}

For piecewise-linear maps, the phenomenon that a branch of a one-dimensional unstable manifold of a periodic solution is completely contained in its stable manifold is codimension-two. Unlike codimension-one homoclinic corners, such `subsumed' homoclinic connections can be associated with stable periodic solutions. The purpose of this paper is to determine the dynamics near a generic subsumed homoclinic connection in two dimensions. Assuming the eigenvalues associated with the periodic solution satisfy $0 < |\lambda| < 1 < \sigma < \frac{1}{|\lambda|}$, in a two-parameter unfolding there exists an infinite sequence of roughly triangular regions within which the map has a stable single-round periodic solution. The result applies to both discontinuous and continuous maps, although these cases admit different characterisations for the border-collision bifurcations that correspond to boundaries of the regions. The result is illustrated with a discontinuous map of Mira and the two-dimensional border-collision normal form.

\end{abstract}

\section{Introduction}
\label{sec:intro}
\setcounter{equation}{0}

We begin by briefly reviewing related classical results for smooth maps.
Let $\Gamma$ be a period-$n$ solution of a diffeomorphism on $\mathbb{R}^2$
with eigenvalues satisfying $0 < |\lambda| < 1 < |\sigma|$.
Then $\Gamma$ has one-dimensional stable and unstable manifolds;
a tangential intersection between these is a {\em homoclinic tangency}.
Generically the tangency is quadratic, to leading order.
Homoclinic tangencies are global, codimension-one bifurcations that create chaos
or represent a crisis where a chaotic attractor suddenly changes size \cite{PaTa93}.


Now suppose $\xi \in \mathbb{R}$ is a parameter of the map and $\Gamma$ has a homoclinic tangency at $\xi = \xi^*$.
Typically there exist {\em single-round} periodic solutions for values of $\xi$ near $\xi^*$.
These have periods $k n + p$ for fixed $p$ and different values of $k$.
If $\xi$ unfolds the tangency in a generic fashion and $|\lambda \sigma| < 1$ at $\xi = \xi^*$,
then there exists an infinite sequence of intervals $I_k$
such that the map has a stable period-$(k n + p)$ solution for all $\xi \in I_k$.
As $k \to \infty$ the intervals converge to $\xi^*$ and are non-overlapping \cite{GaSi72}.

In two-dimensional parameter space homoclinic tangencies occur on curves.
These curves may contain points where the tangency is codimension-two,
such as the intersection between two homoclinic tangency curves associated with the same periodic solution.
A determination of the dynamics near a codimension-two point 
is often helpful as it explains other curves of codimension-one bifurcations that
may extend over large areas of parameter space \cite{Da91,GoTu07,HiLa95}.

This paper concerns piecewise-linear maps on a domain $\Omega \subset \mathbb{R}^2$ that we write as
\begin{equation}
f(x,y) =
\begin{cases}
f_1(x,y), & (x,y) \in \Omega_1 \,, \\[-1.6mm]
\hspace{9mm} \vdots \\[-2.3mm]
f_m(x,y), & (x,y) \in \Omega_m \,.
\end{cases}
\label{eq:f}
\end{equation}
The regions $\Omega_1,\ldots,\Omega_m$ partition the domain $\Omega$,
and each $f_j$ is assumed to be affine (linear plus a constant).
We let $\Sigma$ denote the set of all $P \in \Omega$ for which every neighbourhood of $P$ intersects more than one $\Omega_j$.
We assume $\Sigma$ is a finite union of smooth curves;
it is the collection of {\em switching manifolds} of \eqref{eq:f}.

\begin{figure}[b!]
\begin{center}
\setlength{\unitlength}{1cm}
\begin{picture}(5.2,3.9)
\put(0,0){\includegraphics[height=3.9cm]{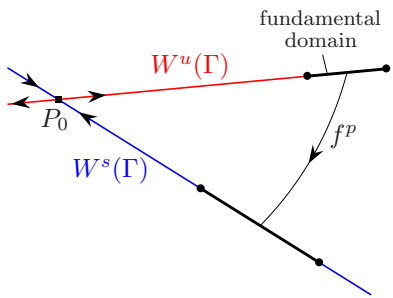}}
\put(.44,2.24){\footnotesize $P_0$}
\put(4.28,2.02){\footnotesize $f^p$}
\put(3.35,3.59){\scriptsize fundamental}
\put(3.69,3.3){\scriptsize domain}
\put(1.9,2.95){\footnotesize \color{red} $W^u(\Gamma)$}
\put(.86,1.57){\footnotesize \color{blue} $W^s(\Gamma)$}
\end{picture}
\caption{
A sketch of the stable (blue) and unstable (red) manifolds of a saddle-type
periodic solution $\Gamma$ of a piecewise-linear map $f$
in a neighbourhood of one point of this solution, $P_0$.
In this neighbourhood the manifolds coincide with the
stable and unstable subspaces associated with $P_0$.
In the simplest instance of a codimension-two subsumed homoclinic connection,
a fundamental domain of a branch of the unstable manifold
maps linearly to the stable subspace under $p$ iterations of $f$, for some $p \ge 1$.
\label{fig:codimTwoHCCornerSchem_a}
}
\end{center}
\end{figure}

Now consider a period-$n$ solution of \eqref{eq:f} that has no points in $\Sigma$
and eigenvalues satisfying $0 < |\lambda| < 1 < |\sigma|$.
Its stable and unstable manifolds are piecewise-linear, so cannot form a quadratic tangency.
Instead they may form a {\em homoclinic corner}: locally one manifold is linear while the other has a kink.
But unlike a homoclinic tangency, even if $|\lambda \sigma| < 1$ close to a generic homoclinic corner
there are no stable periodic solutions \cite{Si16b}.

At a homoclinic corner, one orbit in the unstable manifold belongs to the stable manifold.
It is a codimension-two phenomenon for two different orbits to have this property.
But since \eqref{eq:f} is piecewise-linear, this can imply {\em all} orbits
in a branch of the unstable manifold belong to the stable manifold,
Fig.~\ref{fig:codimTwoHCCornerSchem_a}.
In this case the branch is completely contained within the stable manifold and we say it is {\em subsumed}.


In two-dimensional parameter space subsumed homoclinic connections
typically lie at the intersection of two curves of homoclinic corners.
This occurs for a discontinuous map introduced by Mira in \cite{Mi13},
and the two-dimensional border-collision normal \cite{Si16b}.
In both cases (given as examples below) the intersection is a striking focal point
of the overall bifurcation structure. 
This is because these examples involve $|\lambda \sigma| < 1$, and,
as explained below by Theorem \ref{th:main},
in this case there exists $p \ge 1$ and a sequence of regions $S_k$
in which the map has a stable period-$(k n + p)$ solution
for all sufficiently large values of $k$.
Each $S_k$ is triangular, to leading order,
and overlaps only $S_{k-1}$ and $S_{k+1}$ for large $k$.

The remainder of the paper is organised as follows.
In \S\ref{sec:periodic} we characterise periodic solutions of \eqref{eq:f}
and discuss their existence, admissibility, and stability.
Then in \S\ref{sec:thm} we carefully set up the codimension-two scenario, assuming $\sigma > 1$ for simplicity,
and state Theorem \ref{th:main}.
In \S\ref{sec:proof} we provide a proof of Theorem \ref{th:main} and
in \S\ref{sec:examples} illustrate the result with examples.
Finally \S\ref{sec:conc} provides concluding remarks. 

\section{Periodic solutions: notation and basic properties}
\label{sec:periodic}
\setcounter{equation}{0}

We first explain how periodic solutions of \eqref{eq:f} can be described symbolically.
For example, the period-$3$ solution shown in Fig.~\ref{fig:codimTwoHCCornerSchem_b} is a $135$-cycle
(cyclic permutations of $135$ are also permitted).
Ostensibly, $135$ is a number, but is termed a `word'
because the indices $1,\ldots,m$ are treated as symbols \cite{HaZh98}.
A similar exposition involving only two symbols is presented in \cite{Si17c}.

\begin{figure}[t!]
\begin{center}
\setlength{\unitlength}{1cm}
\begin{picture}(8,6)
\put(0,0){\includegraphics[height=6cm]{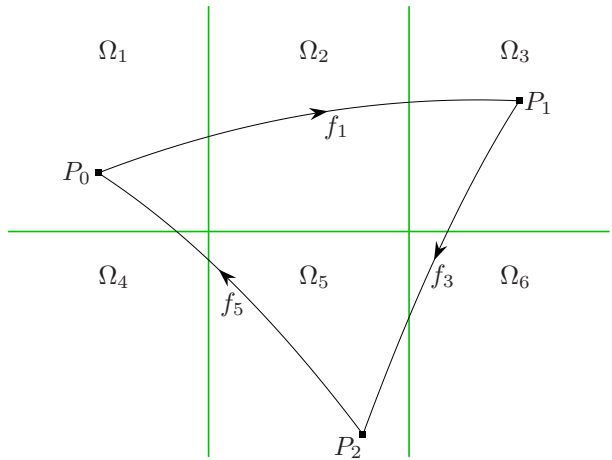}}
\put(1.20,5.3){\footnotesize $\Omega_1$}
\put(3.87,5.3){\footnotesize $\Omega_2$}
\put(6.54,5.3){\footnotesize $\Omega_3$}
\put(1.20,2.3){\footnotesize $\Omega_4$}
\put(3.87,2.3){\footnotesize $\Omega_5$}
\put(6.54,2.3){\footnotesize $\Omega_6$}
\put(.72,3.68){\footnotesize $P_0$}
\put(6.86,4.62){\footnotesize $P_1$}
\put(4.33,.03){\footnotesize $P_2$}
\put(4.2,4.3){\footnotesize $f_1$}
\put(5.61,2.31){\footnotesize $f_3$}
\put(2.82,1.94){\footnotesize $f_5$}
\end{picture}
\caption{
A sketch of the phase space of a map \eqref{eq:f} that involves six regions
showing a $135$-cycle, $\{ P_0, P_1, P_2 \}$.
The green lines represent $\Sigma$.
\label{fig:codimTwoHCCornerSchem_b}
}
\end{center}
\end{figure}

We write a {\em word} $\cX$ of length $n$ as
\begin{equation}
\cX = \cX_0 \cdots \cX_{n-1} \,,
\label{eq:X}
\end{equation}
where $\cX_i \in \{ 1,\ldots,m \}$ for each $i \in \{ 0,\ldots,n-1 \}$.
We write $\cX \cY$ for the concatenation of two words $\cX$ and $\cY$,
and $\cX^k$ for the concatenation of $\cX$ with itself $k$ times.

Given $i \ge 0$, let $f^i$ denote the composition of $f$ with itself $i$ times
($f^0$ is the identity map).
Given a word $\cX$ of length $n$, let
\begin{equation}
f_\cX = f_{\cX_{n-1}} \circ \cdots \circ f_{\cX_0} \,,
\label{eq:fX}
\end{equation}
denote the composition of the pieces of $f$ in the order determined by $\cX$.
Let $\Omega_\cX \subset \Omega$ be the (possibly empty)
set of all $P \in \Omega$ for which $f^i(P) \in \Omega_{\cX_i}$
for all $i \in \{ 0,\ldots,n-1 \}$.
Roughly speaking, $\Omega_\cX$ is the set of all points whose forward orbits follow $\cX$ initially.
Notice $f^n(P) = f_\cX(P)$ for all $P \in \Omega_\cX$.


\begin{definition}
Given a word $\cX$ of length $n$,
an {\em $\cX$-cycle} is an $n$-tuple,
$\left\{ P_0, P_1, \ldots, P_{n-1} \right\}$, for which
$f_{\cX_0}(P_0) = P_1,
f_{\cX_1}(P_1) = P_2, \ldots,
f_{\cX_{n-1}}(P_{n-1}) = P_0$.
\label{df:Xcycle}
\end{definition}

Notice $P_0$ in Definition \ref{df:Xcycle} is a fixed point of $f_\cX$.
Since $f_\cX$ is affine, 
its Jacobian matrix $\rD f_\cX$ is constant in phase space
and we have the following result.

\begin{lemma}
A map \eqref{eq:f} has a unique $\cX$-cycle if and only if $I - \rD f_\cX$ is non-singular.
\label{le:existence}
\end{lemma}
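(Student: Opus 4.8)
The plan is to characterise the existence of an $\cX$-cycle directly in terms of fixed points of the affine map $f_\cX$, and then invoke elementary linear algebra. First I would observe that, by Definition \ref{df:Xcycle}, an $\cX$-cycle $\{P_0,\ldots,P_{n-1}\}$ is completely determined by its first point $P_0$: given $P_0$, the remaining points are forced by $P_{i+1} = f_{\cX_i}(P_i)$, and the closing condition $f_{\cX_{n-1}}(P_{n-1}) = P_0$ is exactly the statement that $f_\cX(P_0) = P_0$. Conversely, any fixed point $P_0$ of $f_\cX$ generates an $\cX$-cycle by setting $P_i = f^i$ applied through the appropriate pieces. Hence $\cX$-cycles are in bijection with fixed points of $f_\cX$, and the map has a \emph{unique} $\cX$-cycle precisely when $f_\cX$ has a unique fixed point.

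Next I would use that $f_\cX$ is affine, so we may write $f_\cX(P) = \rD f_\cX \, P + b$ for a constant matrix $\rD f_\cX$ and constant vector $b \in \mathbb{R}^2$. The fixed point equation $f_\cX(P) = P$ rearranges to $(I - \rD f_\cX) P = b$. This linear system has a unique solution for every right-hand side $b$ if and only if $I - \rD f_\cX$ is non-singular, which gives the claimed equivalence. When $I - \rD f_\cX$ is singular the system has either no solution or infinitely many, so there is not a unique $\cX$-cycle; this handles the converse direction and shows the criterion is sharp.

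One point worth addressing carefully is the distinction between a fixed point of $f_\cX$ as an affine map on all of $\mathbb{R}^2$ (or on $\Omega$) and the admissibility question of whether the resulting cycle actually has $P_i \in \Omega_{\cX_i}$, i.e. whether $P_0 \in \Omega_\cX$. The lemma as stated concerns only existence and uniqueness of the $\cX$-cycle in the sense of Definition \ref{df:Xcycle}, which refers to the pieces $f_{\cX_i}$ and not to the regions; so admissibility is deliberately not part of the statement and need not be discussed here. The only mild obstacle is making sure the bijection between $\cX$-cycles and fixed points of $f_\cX$ is stated cleanly — in particular that distinct fixed points give distinct cycles and that a cycle is not inadvertently double-counted — but this is immediate once one notes that the ordered tuple is recovered from $P_0$ by applying the pieces in order. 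With that bijection in hand, the result is just the standard fact that an affine map on $\mathbb{R}^2$ has a unique fixed point iff its linear part differs from the identity by an invertible matrix.
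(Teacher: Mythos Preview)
Your argument is correct and is exactly the reasoning the paper intends: it states the lemma as an immediate consequence of the sentence preceding it (that $f_\cX$ is affine with constant Jacobian $\rD f_\cX$) and does not write out a separate proof. Your observation that $\cX$-cycles biject with fixed points of the affine map $f_\cX$, together with the elementary fact that $(I-\rD f_\cX)P=b$ has a unique solution iff $I-\rD f_\cX$ is invertible, is precisely the content.
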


\begin{remark}
Lemma \ref{le:existence} implies
the $\cX$-cycle is completely characterised by the word $\cX$,
except in the special case that $I - \rD f_\cX$ is singular
(which is equivalent to $1$ being an eigenvalue of $\rD f_\cX$).
\end{remark}

\begin{definition}
The $\cX$-cycle of Definition \ref{df:Xcycle} is
{\em admissible} if $P_0 \in \Omega_\cX$,
and {\em virtual} otherwise.
\end{definition}

\begin{remark}
Every admissible $\cX$-cycle is a periodic solution of $f$.
Conversely every periodic solution of $f$ is an admissible $\cX$-cycle
for some $\cX$ that is unique up to cyclic permutation.
\end{remark}



If an admissible $\cX$-cycle has no points in $\Sigma$ (as in Fig.~\ref{fig:codimTwoHCCornerSchem_b})
then some neighbourhood of $P_0$ is contained in $\Omega_\cX$.
In this neighbourhood $f^n$ is smooth (in fact equal to $f_\cX$ and so affine).
Thus, as with smooth maps \cite{El08},
the $\cX$-cycle is asymptotically stable if both eigenvalues of $\rD f_\cX$ have modulus less than $1$.
This corresponds to shaded region shown in Fig.~\ref{fig:codimTwoHCCornerSchem_c}
and in fact the converse is true because $f_\cX$ is affine:

\begin{lemma}
Suppose an admissible $\cX$-cycle has no points in $\Sigma$.
Let $\tau$ and $\delta$ be the trace and determinant of $\rD f_\cX$.
Then the $\cX$-cycle is asymptotically stable if and only if $(\tau,\delta)$ belongs
to the interior of the triangle shown in Fig.~\ref{fig:codimTwoHCCornerSchem_c}.
\label{le:stability}
\end{lemma}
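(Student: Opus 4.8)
The plan is to exploit the fact that $f_\cX$ is affine, so that near the cycle the $n$-th return map $f^n$ has no nonlinear part at all, and then to translate the spectral condition on $M := \rD f_\cX$ into the classical trace--determinant inequalities. First I would set up the local linearisation. Since the $\cX$-cycle $\{P_0,\ldots,P_{n-1}\}$ is admissible and meets no switching manifold, each $P_i$ lies in the interior of $\Omega_{\cX_i}$, so (as already noted before Lemma~\ref{le:stability}) there is a neighbourhood $U$ of $P_0$ with $U \subseteq \Omega_\cX$ and $f^n = f_\cX$ on $U$. Because $f_\cX$ is affine with fixed point $P_0$, we have $f_\cX(P)-P_0 = M(P-P_0)$ for every $P$, hence $f^{nk}(P)-P_0 = M^k(P-P_0)$ as long as $P, f^n(P),\ldots,f^{n(k-1)}(P)$ stay in $U$. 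So I would argue that the $\cX$-cycle is asymptotically stable as a periodic solution of $f$ if and only if the spectral radius of $M$ is strictly less than $1$: for the ``if'' direction, pick a norm adapted to $M$ in which $\|M\|<1$, so a small ball about $P_0$ is forward $f_\cX$-invariant and contracts, and continuity of $f_{\cX_0},\ldots,f_{\cX_{n-1}}$ keeps the intermediate iterates near $P_1,\ldots,P_{n-1}$; for ``only if'', if the spectral radius is $\ge 1$ choose a (generalised) eigenvector $v$ for an eigenvalue of modulus $\ge 1$, and note the orbit of $P_0+\ee v$ is $P_0+\ee M^k v$, which does not converge to $P_0$.

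Next I would reduce ``both eigenvalues of $M$ lie in the open unit disc'' to the description of the triangle. The eigenvalues are the roots of $\mu^2-\tau\mu+\delta$, and the Schur--Cohn / Jury criterion for a real quadratic (see, e.g., \cite{El08}) gives that both roots lie in the open unit disc exactly when $\delta<1$, $\ 1-\tau+\delta>0$, and $\ 1+\tau+\delta>0$; I would include the two-line verification, splitting into the real-root and complex-conjugate cases, or evaluating the characteristic polynomial at $\mu=\pm 1$ together with the product-of-roots relation $\mu_1\mu_2=\delta$, so the lemma is self-contained. These three strict inequalities cut out precisely the open triangle with vertices $(2,1)$, $(-2,1)$, $(0,-1)$, whose sides lie on $\delta=1$, $\delta=\tau-1$, and $\delta=-\tau-1$ --- the triangle of Fig.~\ref{fig:codimTwoHCCornerSchem_c} --- while the degenerate cases (a root at $\pm 1$, or a complex-conjugate pair on the unit circle) lie on its boundary and, by the previous paragraph, fail to be asymptotically stable. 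I would also remark that $1-\tau+\delta>0$ on the interior forces $1$ not to be an eigenvalue of $M$, consistently with Lemma~\ref{le:existence}.

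The main point where care is needed is the converse half of the first step: verifying that asymptotic stability genuinely fails whenever the spectral radius is at least $1$, including the non-diagonalisable case (a repeated eigenvalue $\pm 1$ with a nontrivial Jordan block gives linear growth) and the case of a conjugate pair on the unit circle (bounded, non-attracting orbits). This is exactly the feature that distinguishes the piecewise-linear setting from the smooth one: here there are no higher-order terms that could stabilise an unstable linearisation or destabilise a neutral one, so the linear test is both necessary and sufficient. Beyond this, the argument is the standard reduction to the return map plus the classical quadratic stability test, so I do not expect any genuine obstacle.
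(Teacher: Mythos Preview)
Your proposal is correct and matches the paper's approach. The paper does not give a formal proof of this lemma: it simply observes in the preceding sentence that the ``if'' direction follows as for smooth maps (citing \cite{El08}) and that the converse holds ``because $f_\cX$ is affine.'' Your plan expands exactly this reasoning---using that $f^n=f_\cX$ on a neighbourhood of $P_0$ (since the cycle avoids $\Sigma$), that an affine map has no higher-order terms so stability is governed purely by the spectral radius of $\rD f_\cX$, and then the standard Jury/Schur--Cohn translation into the inequalities $\delta<1$, $\delta>\tau-1$, $\delta>-\tau-1$ defining the open triangle. The care you flag for the boundary cases (repeated eigenvalue $\pm 1$ with a Jordan block, or a conjugate pair on the unit circle) is exactly the content of the paper's remark that affinity makes the converse true, so nothing is missing.
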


\begin{figure}[t!]
\begin{center}
\setlength{\unitlength}{1cm}
\begin{picture}(5.2,3.9)
\put(0,0){\includegraphics[height=3.9cm]{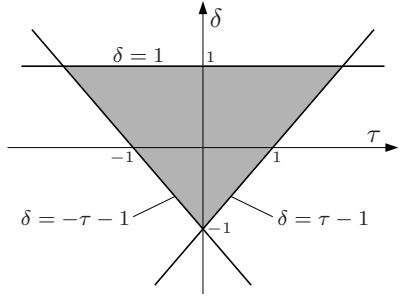}}
\put(4.78,2.02){\footnotesize $\tau$}
\put(2.69,3.55){\footnotesize $\delta$}
\put(.18,.92){\scriptsize $\delta = -\tau - 1$}
\put(3.59,.92){\scriptsize $\delta = \tau - 1$}
\put(1.4,3.09){\scriptsize $\delta = 1$}
\put(1.36,1.75){\tiny $-1$}
\put(3.52,1.75){\tiny $1$}
\put(2.65,.81){\tiny $-1$}
\put(2.64,3.08){\tiny $1$}
\end{picture}
\caption{
The shaded region is where both eigenvalues of a $2 \times 2$ matrix,
with trace $\tau$ and determinant $\delta$, have modulus less than $1$.
\label{fig:codimTwoHCCornerSchem_c}
}
\end{center}
\end{figure}



\section{Subsumed homoclinic connections and single-round periodic solutions}
\label{sec:thm}
\setcounter{equation}{0}

In this section we develop a precise symbolic and geometric description of
subsumed homoclinic connections building up to the statement of Theorem \ref{th:main}.
Theorem \ref{th:main} is essence quite simple.
It gives, explicitly but just to leading order, the set of all points where the map has a stable single-round periodic solution.
However, some technicalities precede the theorem statement because in order for us to provide an explicit formula we need to
construct suitable coordinates in parameter space
and to prove that single-round periodic solutions are admissible we require certain
global assumptions on the nature of the homoclinic connection.

\subsection{Persistent properties of the stable and unstable manifolds of an $\cX$-cycle}

Let $\Gamma = \left\{ P_0, P_1, \ldots, P_{n-1} \right\}$ be an admissible $\cX$-cycle, as in Definition \ref{df:Xcycle}.
The {\em stable manifold} of the $\cX$-cycle, $W^s(\Gamma)$,
is defined as all $P \in \Omega \setminus \Gamma$
for which $f^i(P)$ converges to the $\cX$-cycle as $i \to \infty$.
The {\em unstable manifold} of the $\cX$-cycle, $W^u(\Gamma)$,
is defined as all $P \in \Omega \setminus \Gamma$
for which there exists a sequence of preimages $f^i(P)$
that converges to the $\cX$-cycle as $i \to -\infty$.

Suppose the $\cX$-cycle has no points in $\Sigma$
and the eigenvalues of $\rD f_\cX$, call them $a$ and $b$, satisfy $0 < |a| < 1 < b$.
Then $W^s(\Gamma)$ and $W^u(\Gamma)$
are one-dimensional and emanate linearly from each point $P_i$.
As they emanate from $P_0$, for example, they coincide with
the stable and unstable subspaces $E^s(P_0)$ and $E^u(P_0)$.
These are lines through $P_0$ with directions given by the eigenvectors of $\rD f_\cX$.
Note that $W^u(\Gamma)$ has two {\em dynamically independent} branches because $b > 0$.

We introduce the notation
\begin{equation}
\cL(P,Q) = \left\{ (1-t) P + t Q \,\middle|\, 0 < t < 1 \right\},
\nonumber
\end{equation}
to denote all points between $P, Q \in \mathbb{R}^2$ (if $P \ne Q$ this is a line segment
that does not include its endpoints).
Observe $P_0$ belongs to the interior of $\Omega_\cX$.
Suppose $E^u(P_0)$ intersects $\Sigma$ at some $Z \in \Omega$, and $\cL(P_0,Z) \subset \Omega_\cX$,
see already Fig.~\ref{fig:codimTwoHCCornerSchem_d}.
Then every $P \in \cL \left( P_0, f_\cX(Z) \right)$ belongs to one branch of $W^u(\Gamma)$
because the preimage of $P$ under $f_\cX$ belongs to $\Omega_\cX$.
For this reason $\{ Z \} \cup \cL \left( Z, f_\cX(Z) \right)$ is a {\em fundamental domain} for
this branch of $W^u(\Gamma)$ (it contains exactly one point in every orbit in the branch).
This particular fundamental domain is significant because $Z \in \Sigma$
and thus $W^u(\Gamma)$ has a kink at $f_\cX(Z)$ (except in special cases).





\subsection{Subsumed homoclinic connections}

\begin{definition}
The $\cX$-cycle is said to have a {\em subsumed homoclinic connection}
if one branch of $W^u(\Gamma)$ is a subset of $W^s(\Gamma)$
(or one branch of $W^s(\Gamma)$ is a subset of $W^u(\Gamma)$).
\label{df:shc}
\end{definition}


Here we provide sufficient conditions for an $\cX$-cycle to have a subsumed homoclinic connection.
Let $\cY$ be a word of length $p$.
Let $\cB$ be a compact, convex set with $P_0 \in \cB \subset {\rm int} \left( \Omega_{\cX^2} \right)$
(where ${\rm int}(\cdot)$ denotes the interior of a set).
In Theorem \ref{th:main} we unfold the codimension-two scenario that $Z$ and $f_\cX(Z)$ both
map to $\cB \cap E^s(P_0)$ under $f_\cY$, that is
\begin{equation}
f_\cY(Z), f_{\cX \cY}(Z) \in \cB \cap E^s(P_0).
\label{eq:codimTwoCondition}
\end{equation}
Since $f_\cY$ is affine, \eqref{eq:codimTwoCondition} implies all points
in $\cL \left( Z, f_\cX(Z) \right)$ map to $\cB \cap E^s(P_0)$ under $f_\cY$.
The assumption that $\cB \subset {\rm int} \left( \Omega_{\cX^2} \right)$ is convex ensures
$\cB \cap E^s(P_0)$ is contained in $W^s(\Gamma)$
(if $a > 0$ the weaker assumption $\cB \subset {\rm int} \left( \Omega_\cX \right)$ is sufficient).
For \eqref{eq:codimTwoCondition} to imply a subsumed homoclinic connection
we need the admissibility assumption
\begin{equation}
\cL \left( Z, f_\cX(Z) \right) \subset \Omega_\cY \,,
\label{eq:basicAdmissibilityAssumption}
\end{equation}
and some information about the forward orbit of $Z$.

\begin{proposition}
Suppose \eqref{eq:codimTwoCondition} and \eqref{eq:basicAdmissibilityAssumption} are satisfied.
Suppose either (i) $Z \in \Omega_\cY$, or (ii) $Z \in \Omega_\cX$ and $f_\cX(Z) \in \Omega_\cY$.
Then the $\cX$-cycle has a subsumed homoclinic connection.
\label{pr:shc}
\end{proposition}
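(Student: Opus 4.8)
The plan is to show that the entire branch of $W^u(\Gamma)$ generated by the fundamental domain $\{Z\} \cup \cL(Z, f_\cX(Z))$ lies in $W^s(\Gamma)$. The key observation is that a branch of $W^u(\Gamma)$ is the union of forward images under $f$ of a fundamental domain together with the "seed" segment $\cL(P_0, Z)$ that joins $P_0$ to $Z$; equivalently, every orbit in this branch contains exactly one point of the closed fundamental domain $\overline{\cL(Z, f_\cX(Z))} = \{Z\} \cup \cL(Z, f_\cX(Z)) \cup \{f_\cX(Z)\}$. So it suffices to prove that every point of this closed fundamental domain lies in $W^s(\Gamma)$, since $W^s(\Gamma)$ is forward-invariant and also backward-invariant along admissible orbits (a preimage of a point converging to $\Gamma$ still converges to $\Gamma$).

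First I would handle the generic interior points. By \eqref{eq:basicAdmissibilityAssumption}, $\cL(Z, f_\cX(Z)) \subset \Omega_\cY$, so $f^p$ acts as $f_\cY$ on this segment; hence each $P \in \cL(Z, f_\cX(Z))$ satisfies $f^p(P) = f_\cY(P)$. Since $f_\cY$ is affine and both endpoints $f_\cY(Z)$ and $f_{\cX\cY}(Z) = f_\cY(f_\cX(Z))$ lie in $\cB \cap E^s(P_0)$ by \eqref{eq:codimTwoCondition}, the image $f_\cY(\cL(Z, f_\cX(Z)))$ is the segment between these two points, which lies in $\cB \cap E^s(P_0)$ by convexity of $\cB$. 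As noted in the text, $\cB \subset \mathrm{int}(\Omega_{\cX^2})$ being convex forces $\cB \cap E^s(P_0) \subset W^s(\Gamma)$: points of $E^s(P_0)$ near $P_0$ converge to the cycle under $f_\cX = f^n$ while staying in $\Omega_\cX$, and the convex set $\cB$ keeps the whole segment inside $\Omega_{\cX^2}$ long enough for this convergence to kick in. Therefore $f^p(P) \in W^s(\Gamma)$, and since $W^s(\Gamma)$ is forward-invariant, $P \in W^s(\Gamma)$ — and $P$ lies in the unstable branch because its $f_\cX$-preimage is in $\Omega_\cX$, as established before the proposition. This deals with all of $\cL(Z, f_\cX(Z))$.

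It remains to include the two endpoints $Z$ and $f_\cX(Z)$ in the subsumed branch, and this is where the two cases of the hypothesis enter. In case (i), $Z \in \Omega_\cY$ too, so $f^p(Z) = f_\cY(Z) \in \cB \cap E^s(P_0) \subset W^s(\Gamma)$, giving $Z \in W^s(\Gamma)$ directly; and $f_\cX(Z) = f^n(Z)$ is then a forward iterate of $Z$, hence also in $W^s(\Gamma)$. In case (ii), $Z \in \Omega_\cX$ gives $f^n(Z) = f_\cX(Z)$, and $f_\cX(Z) \in \Omega_\cY$ gives $f^p(f_\cX(Z)) = f_{\cX\cY}(Z) \in \cB \cap E^s(P_0) \subset W^s(\Gamma)$, so $f_\cX(Z) \in W^s(\Gamma)$ and consequently $Z \in W^s(\Gamma)$ by backward-invariance along this admissible orbit. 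Either way, the closure of the fundamental domain lies in $W^s(\Gamma)$, hence so does the whole branch of $W^u(\Gamma)$ it generates, which is precisely Definition \ref{df:shc}. I would also double check that this branch is genuinely one of the two dynamically independent branches of $W^u(\Gamma)$ and is nonempty, which follows since $b > 0$ and $Z \in \Omega$ is a genuine point on $E^u(P_0)$.

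The main obstacle I anticipate is the bookkeeping around the symbolic/dynamical dictionary near the endpoints: one must be careful that $Z \in \Sigma$ (so $f$ is not single-valued there in the naive sense) does not cause trouble, and that "$f^p(P) = f_\cY(P)$ on the segment" is legitimately inherited by the endpoints under the stated hypotheses rather than assumed. The cleanest way around this is to work throughout with the affine maps $f_\cX$, $f_\cY$, $f_{\cX\cY}$ directly — these are defined everywhere and agree with the iterates of $f$ on the relevant admissibility sets — and only invoke $W^s(\Gamma)$, $W^u(\Gamma)$ at the very end via forward- and backward-invariance, so that no delicate continuity argument at the switching manifold is needed. The genericity caveats ("except in special cases", the kink at $f_\cX(Z)$) are not needed for the proposition itself, only for the later sharper statements, so I would not dwell on them here.
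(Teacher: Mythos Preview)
Your proposal is correct and follows essentially the same approach as the paper's own proof: use \eqref{eq:basicAdmissibilityAssumption} and the affinity of $f_\cY$ to send $\cL(Z,f_\cX(Z))$ into $\cB\cap E^s(P_0)$, then use condition (i) or (ii) to place the forward orbit of $Z$ there as well, so the whole fundamental domain lies in $W^s(\Gamma)$. The paper's version is terser---it works with the half-open fundamental domain $\{Z\}\cup\cL(Z,f_\cX(Z))$ rather than your closed segment, and omits your explanatory remarks about $\cB\cap E^s(P_0)\subset W^s(\Gamma)$ and the bookkeeping near $\Sigma$---but the logical skeleton is identical.
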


\begin{proof}
For any $P \in \cL \left( Z, f_\cX(Z) \right)$ we have $f^p(P) \in \cB \cap E^s(P_0)$
by \eqref{eq:codimTwoCondition} and \eqref{eq:basicAdmissibilityAssumption} and because $f_\cY$ is affine.
Under condition (i) we have $f^p(Z) \in \cB \cap E^s(P_0)$,
while under condition (ii) we have $f^{n+p}(Z) \in \cB \cap E^s(P_0)$.
In either case every point in the fundamental domain $\{ Z \} \cup \cL \left( Z, f_\cX(Z) \right)$
belongs to $W^s(\Gamma)$, hence $f$ has a subsumed homoclinic connection.
\end{proof}

\subsection{Change of coordinates for variables and parameters}

Now suppose $\Sigma$ and each $f_j$ depend smoothly on parameters $\xi, \zeta \in \mathbb{R}$.
Theorem \ref{th:main} includes the following assumption.

\begin{assumption}
At $(\xi,\zeta) = (0,0)$,
\begin{enumerate}
\setlength{\itemsep}{0pt}
\item
\eqref{eq:f} has an admissible $\cX$-cycle with no points on $\Sigma$
and its eigenvalues satisfy $0 < |a| < 1 < b$,
\item
$Z$, defined as above, lies at a transverse intersection between $E^u(P_0)$ and $\Sigma$, and
\item
$\cL(P_0,Z) \subset {\rm int}(\Omega_{\cX})$ and $\cL \left( Z, f_\cX(Z) \right) \subset {\rm int}(\Omega_\cY)$.
\end{enumerate}
\label{as:main}
\end{assumption}

Since the $\cX$-cycle is hyperbolic it persists smoothly
as a saddle with no points on $\Sigma$ in a neighbourhood of $(\xi,\zeta) = (0,0)$.
Its eigenvalues $a$ and $b$ vary smoothly with $\xi$ and $\zeta$
as does $Z$ because transverse intersections persist.

Let $q_{\rm unstab} = Z - P_0$.
This is an eigenvector of $\rD f_\cX$ corresponding to the eigenvalue $b$.
Let $q_{\rm stab}$ be a smoothly varying eigenvector of $\rD f_\cX$ corresponding to the eigenvalue $a$.
Form the matrix $T = \begin{bmatrix} q_{\rm stab} & q_{\rm unstab} \end{bmatrix}$.
Then
\begin{equation}
h(P) = T^{-1} (P - P_0)
\label{eq:coordinateChangeVariables}
\end{equation}
gives the coordinates of any $P \in \Omega$ relative to the eigenvectors.
The function $h = (h_1,h_2)$ provides a coordinate change on the variables of \eqref{eq:f}
that we use in \S\ref{sec:proof}.

\begin{figure}[t!]
\begin{center}
\setlength{\unitlength}{1cm}
\begin{picture}(12,6)
\put(0,0){\includegraphics[height=6cm]{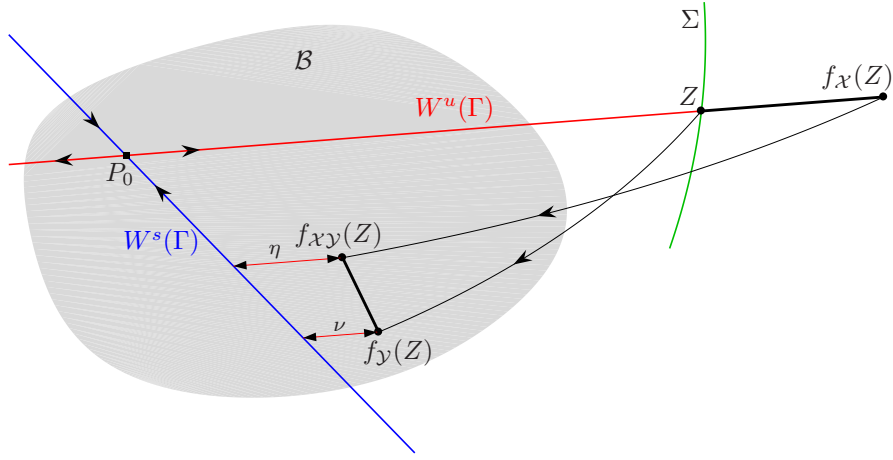}}
\put(1.29,3.61){\footnotesize $P_0$}
\put(8.91,4.63){\footnotesize $Z$}
\put(10.8,4.9){\footnotesize $f_\cX(Z)$}
\put(4.71,1.25){\footnotesize $f_\cY(Z)$}
\put(3.8,2.8){\footnotesize $f_{\cX \cY}(Z)$}
\put(8.94,5.65){\footnotesize $\Sigma$}
\put(3.8,5.1){\footnotesize $\cB$}
\put(3.46,2.64){\scriptsize $\eta$}
\put(4.32,1.62){\scriptsize $\nu$}
\put(5.4,4.48){\footnotesize \color{red} $W^u(\Gamma)$}
\put(1.52,2.71){\footnotesize \color{blue} $W^s(\Gamma)$}
\end{picture}
\caption{
A sketch of the stable (blue) and unstable (red) manifolds of the $\cX$-cycle near $P_0$.
Here the manifolds coincide with the lines $E^s(P_0)$ and $E^u(P_0)$.
The meaning of $\eta$ and $\nu$,
defined by \eqref{eq:eta}--\eqref{eq:nu}, is indicated.
\label{fig:codimTwoHCCornerSchem_d}
}
\end{center}
\end{figure}

Let
\begin{align}
\eta &= h_2 \left( f_{\cX \cY}(Z) \right),
\label{eq:eta} \\
\nu &= h_2 \left( f_\cY(Z) \right).
\label{eq:nu}
\end{align}
As illustrated in Fig.~\ref{fig:codimTwoHCCornerSchem_d},
these values provide a measure for the distance of $f_{\cX \cY}(Z)$ and $f_\cY(Z)$
from $E^s(P_0)$ in the direction $q_{\rm unstab}$.

Theorem \ref{th:main} assumes the codimension-two condition
\eqref{eq:codimTwoCondition} holds at $(\xi,\zeta) = (0,0)$.
This condition implies $(\eta,\nu) = (0,0)$.
In order for $\xi$ and $\zeta$ to unfold the codimension-two point in a generic fashion
we require $\det(J) \big|_{(\xi,\zeta) = (0,0)} \ne 0$, where
\begin{equation}
J = \begin{bmatrix}
\frac{\partial \eta}{\partial \xi} &
\frac{\partial \eta}{\partial \zeta} \\[1mm]
\frac{\partial \nu}{\partial \xi} &
\frac{\partial \nu}{\partial \zeta}
\end{bmatrix}.
\label{eq:J}
\end{equation}
This ensures that the coordinate change on the parameters,
$(\xi,\zeta) \rightarrow (\eta,\nu)$, is locally invertible,
and we write this coordinate change as
\begin{equation}
(\eta,\nu) = \psi(\xi,\zeta).
\label{eq:coordinateChangeParameters}
\end{equation}

\subsection{Main result}

An $\cX^k \cY$-cycle, which we write as
$\left\{ \hat{P}^{(k)}_0, \ldots, \hat{P}^{(k)}_{k n + p - 1} \right\}$,
has period $k n + p$ and is a single-round periodic solution in the sense that
it involves a single `excursion' following the word $\cY$.
We centre our analysis around the particular point $\hat{P}^{(k)}_{k n}$
which, roughly speaking, is the point of the $\cX^k \cY$-cycle that gets mapped under $f_\cY$.

\begin{definition}
At $(\xi,\zeta) = (0,0)$ let $\cM$ be a neighbourhood of $\cL \left( Z, f_\cX(Z) \right)$.
For each $k \ge 0$, let $S_k \subset \mathbb{R}^2$ be the set of all $(\xi,\zeta)$ for which \eqref{eq:f}
has an admissible, asymptotically stable $\cX^k \cY$-cycle with $\hat{P}^{(k)}_{k n} \in \cM$.
\label{df:Sk}
\end{definition}

In Theorem \ref{th:main}, $\lambda = a(0,0)$ and $\sigma = b(0,0)$ denote the
eigenvalues of the $\cX$-cycle at $(\xi,\zeta) = (0,0)$.
Given $k \ge 0$ and $\gamma \ge 0$, let
\begin{align}
\Delta_{k,\gamma}^\pm = \left\{ (\eta,\nu) \,\middle|\,
\eta < \sigma^{-(k-1)} \pm \gamma^k,
\nu > \sigma^{-k} \mp \gamma^k,
\nu < \eta + (\sigma-1) \sigma^{-k} \pm \gamma^k \right\},
\label{eq:Delta}
\end{align}
shown in Fig.~\ref{fig:codimTwoHCCornerSchem_f}.

\begin{figure}[b!]
\begin{center}
\setlength{\unitlength}{1cm}
\begin{picture}(8,6)
\put(0,0){\includegraphics[height=6cm]{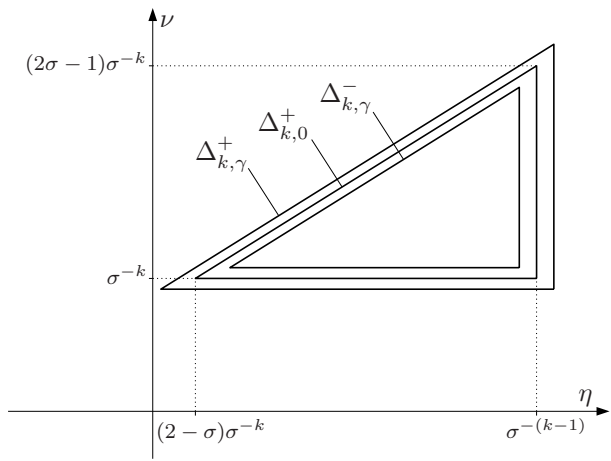}}
\put(7.58,.77){\footnotesize $\eta$}
\put(2.02,5.75){\footnotesize $\nu$}
\put(.23,5.17){\scriptsize $(2 \sigma - 1) \sigma^{-k}$}
\put(1.31,2.33){\scriptsize $\sigma^{-k}$}
\put(1.97,.28){\scriptsize $(2 - \sigma) \sigma^{-k}$}
\put(6.62,.28){\scriptsize $\sigma^{-(k-1)}$}
\put(2.49,3.98){\footnotesize $\Delta_{k,\gamma}^+$}
\put(3.32,4.36){\footnotesize $\Delta_{k,0}^+$}
\put(4.14,4.74){\footnotesize $\Delta_{k,\gamma}^-$}
\end{picture}
\caption{
The regions $\Delta_{k,\gamma}^-$ and $\Delta_{k,\gamma}^+$
provide inner and outer bounds for $\psi(S_k)$,
where $\psi(S_k)$ represents pairs $(\eta,\nu)$
for which \eqref{eq:f} has an admissible, stable $\cX^k \cY$-cycle,
see \eqref{eq:coordinateChangeParameters} and Definition \ref{df:Sk}.
The leading order (large $k$) approximation to $\psi(S_k)$ is $\Delta_{k,0}^+ = \Delta_{k,0}^-$
because we assume $\gamma < \frac{1}{\sigma}$ (Theorem \ref{th:main} provides a specific value for $\gamma$).
\label{fig:codimTwoHCCornerSchem_f}
}
\end{center}
\end{figure}

\begin{theorem}
Suppose Assumption \ref{as:main} is satisfied,
\eqref{eq:codimTwoCondition} is satisfied at $(\xi,\zeta) = (0,0)$,
and $\cX_0 \ne \cY_0$.
Let $\lambda = a(0,0)$ and $\sigma = b(0,0)$,
suppose $|\lambda| \sigma < 1$, and let
$\gamma = \max \left[ \sigma^{-\frac{3}{2}}, |\lambda|^{\frac{1}{2}} \sigma^{-\frac{1}{2}} \right]$.
Also suppose $\det(J) \ne 0$.
Then there exists $k_{\rm min} \ge 0$,
a neighbourhood $\cM$ as in Definition \ref{df:Sk},
and a neighbourhood $\cN$ of $(\xi,\zeta) = (0,0)$, such that in $\cN$
\begin{equation}
\psi^{-1} \left( \Delta_{k,\gamma}^- \right) \subset S_k \subset \psi^{-1} \left( \Delta_{k,\gamma}^+ \right),
\label{eq:main}
\end{equation}
for all $k \ge k_{\rm min}$.
\label{th:main}
\end{theorem}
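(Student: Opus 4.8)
The plan is to work entirely in the eigenvector coordinates $h=(h_1,h_2)$ of \eqref{eq:coordinateChangeVariables}, where $\rD f_\cX$ becomes $\mathrm{diag}(a,b)$ to leading order, and to track the candidate point $\hat P^{(k)}_{kn}$ of an $\cX^k\cY$-cycle as a fixed point of $f_{\cX^k\cY}$. Since $\hat P^{(k)}_{kn}$ is asked to lie in $\cM$, a neighbourhood of $\cL(Z,f_\cX(Z))$, I would first write the return map as a composition $f_\cY \circ f_\cX^{\,k}$ restricted to a neighbourhood of that segment: applying $f_\cX^{\,k}$ contracts the $q_{\rm stab}$-direction by $a^k$ and expands the $q_{\rm unstab}$-direction by $b^k$, so the image of $\cM$ is a long thin sliver along $E^u(P_0)$; then $f_\cY$ is affine and, by the codimension-two condition \eqref{eq:codimTwoCondition} together with the parameter unfolding, sends a neighbourhood of $\cL(Z,f_\cX(Z))$ to a neighbourhood of $\cB\cap E^s(P_0)$ displaced by an amount governed by $(\eta,\nu)$. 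Composing, the return map on $\cM$ is, to leading order, an affine contraction whose fixed point $\hat P^{(k)}_{kn}$ I can solve for explicitly in terms of $\lambda,\sigma,\eta,\nu$ and $k$; the $O(\gamma^k)$ slack in \eqref{eq:Delta} is exactly the bound on the higher-order corrections coming from (a) the variation of $a,b$ away from $\lambda,\sigma$, which is $O(\xi,\zeta)$ hence small on $\cN$, and (b) the fact that $f_\cX^{\,k}$ is only piecewise-affine — but by Assumption~\ref{as:main}(iii) the relevant segment stays in $\mathrm{int}(\Omega_\cX)$, so for points near it the composition really is $f_\cX^{\,k}$ with error controlled by $|\lambda|^k$ and $\sigma^{-k}$, which is where the specific $\gamma = \max[\sigma^{-3/2},|\lambda|^{1/2}\sigma^{-1/2}]$ comes from.

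Second, I would impose the two remaining requirements in Definition~\ref{df:Sk}: admissibility (the full orbit of $\hat P^{(k)}_{kn}$ follows $\cX^k\cY$) and asymptotic stability. For admissibility, the $p$ steps following $\cY$ are handled by \eqref{eq:basicAdmissibilityAssumption} and Proposition~\ref{pr:shc}-type reasoning (since the cycle point is near $\cL(Z,f_\cX(Z))\subset\mathrm{int}(\Omega_\cY)$ and $\cB\subset\mathrm{int}(\Omega_{\cX^2})$), while the $kn$ steps following $\cX^k$ reduce to checking that the first few and last few iterates lie in $\Omega_{\cX_i}$; the geometric convergence means it suffices to check finitely many, and the relevant inequalities are precisely $\eta<\sigma^{-(k-1)}$-type and $\nu>\sigma^{-k}$-type conditions — i.e. the point being mapped must sit on the correct side of $Z$ and of $f_\cX(Z)$ along $E^u(P_0)$. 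These give the first two defining inequalities of $\Delta_{k,\gamma}^\pm$. For stability, I compute $\rD f_{\cX^k\cY} = \rD f_\cY\,(\rD f_\cX)^k$; its determinant is $(\lambda b)^k\det(\rD f_\cY)\cdot(1+o(1))$, which $\to 0$ since $|\lambda|\sigma<1$, so the determinant condition of Lemma~\ref{le:stability} holds automatically for large $k$. The trace is dominated by the $\sigma^k$-expansion contracted against the $q_{\rm stab}$-component of the image direction, and setting $|\tau|<1+\delta$ from Lemma~\ref{le:stability} turns into the third inequality $\nu<\eta+(\sigma-1)\sigma^{-k}$ after substituting the explicit fixed point. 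I would phrase all three as: the fixed point exists, is admissible, and is stable iff $(\eta,\nu)$ lies in a triangle whose vertices are $\sigma^{-k},\sigma^{-(k-1)}$-scale points, up to $O(\gamma^k)$.

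Third, assembling the inclusions \eqref{eq:main}: the inner inclusion $\psi^{-1}(\Delta_{k,\gamma}^-)\subset S_k$ follows because on $\Delta_{k,\gamma}^-$ all the strict inequalities hold with margin $\gamma^k$ exceeding the error terms, so the cycle genuinely exists, is admissible and stable; the outer inclusion $S_k\subset\psi^{-1}(\Delta_{k,\gamma}^+)$ follows because if any of the three conditions fails by more than $\gamma^k$ then the corresponding requirement (existence-in-$\cM$, admissibility, or stability) provably fails. Choosing $\cN$ small forces $|\xi|,|\zeta|$ small so $a,b$ stay within $\gamma^k$-irrelevant distance of $\lambda,\sigma$ for all $k\ge k_{\rm min}$, and $k_{\rm min}$ is chosen large enough that $\gamma^k$ dominates every higher-order remainder uniformly; shrinking $\cM$ around $\cL(Z,f_\cX(Z))$ is what lets the "return map on $\cM$" description be literally valid.

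\textbf{Main obstacle.} The delicate point is not the leading-order bookkeeping but the uniform-in-$k$ control of the error terms: one must show a single neighbourhood $\cN$ and a single $k_{\rm min}$ work simultaneously for all large $k$, even though the region $\psi(S_k)$ shrinks like $\sigma^{-k}$. This requires carefully separating the sources of error — parameter drift of the eigenvalues ($O(\xi,\zeta)$, killed by shrinking $\cN$), nonlinear/piecewise corrections to $f_\cX^{\,k}$ near the segment ($O(|\lambda|^k)$ and $O(\sigma^{-k})$, killed by the choice of $\gamma$), and the placement of $\hat P^{(k)}_{kn}$ within $\cM$ — and showing each is $o(\gamma^k)$ with constants independent of $k$. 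Getting the exponent in $\gamma=\max[\sigma^{-3/2},|\lambda|^{1/2}\sigma^{-1/2}]$ exactly right, so that it beats the worst remainder while still satisfying $\gamma<1/\sigma$ (needed for $\Delta_{k,0}^+=\Delta_{k,0}^-$ to be the genuine leading-order shape), is the crux of the estimate.
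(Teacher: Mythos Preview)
Your proposal is correct and follows essentially the same route as the paper: work in the eigenvector coordinates $h$, write $g_{\cX^k\cY}$ explicitly, identify the horizontal and vertical edges of $\Delta_{k,\gamma}^\pm$ with the admissibility constraint $1<v^{(k)}<\sigma$ (the $v$-coordinate of $\hat P^{(k)}_{kn}$ must lie between $h(Z)$ and $h(f_\cX(Z))$) and the angled edge with the stability condition $\delta_k+\tau_k+1>0$, then use the $\gamma^k$ margin to absorb the $\cO(\sigma^{-2k})$ and $\cO(\lambda^k)$ remainders. One minor slip: since $f_{\cX^k\cY}$ is affine its trace is constant, equal to $a^kc_{11}+b^kc_{22}$ with $c_{22}=(\eta-\nu)/(b-1)$, so the third inequality falls out directly from $\tau_k>-1-\delta_k$ with no need to ``substitute the explicit fixed point,'' and likewise there are no ``nonlinear/piecewise corrections to $f_\cX^{\,k}$'' --- the second-order remainders come purely from the $a^k$ cross-terms and the drift of $a,b$ away from $\lambda,\sigma$.
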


Theorem \ref{th:main}, proved in \S\ref{sec:proof}, provides inner and outer bounds for $S_k$.
Since $\gamma^k \to 0$ faster than $\sigma^{-k} \to 0$,
the relative error that this provides for the location of $S_k$ goes to zero as $k \to \infty$,
specifically $\frac{{\rm Area} \left( \Delta_{k,\gamma}^+ \setminus \Delta_{k,\gamma}^- \right)}
{{\rm Area} \left( \Delta_{k,\gamma}^+ \right)} \to 0$.

We conclude this section with some technical remarks
and determine how the $S_k$ overlap one another, Fig.~\ref{fig:codimTwoHCCornerSchem_e}.
Overlaps are significant as they imply two stable periodic solutions coexist.

\begin{figure}[b!]
\begin{center}
\setlength{\unitlength}{1cm}
\begin{picture}(8,6)
\put(0,0){\includegraphics[height=6cm]{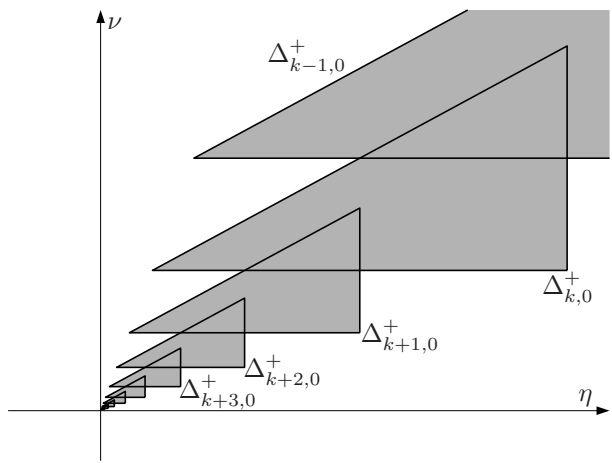}}
\put(7.58,.8){\footnotesize $\eta$}
\put(1.32,5.75){\footnotesize $\nu$}
\put(3.45,5.3){\footnotesize $\Delta_{k-1,0}^+$}
\put(7.1,2.22){\footnotesize $\Delta_{k,0}^+$}
\put(4.69,1.58){\footnotesize $\Delta_{k+1,0}^+$}
\put(3.13,1.11){\footnotesize $\Delta_{k+2,0}^+$}
\put(2.27,.84){\footnotesize $\Delta_{k+3,0}^+$}
\end{picture}
\caption{
A sketch of $\Delta_{k+j,0}^+ = \Delta_{k+j,0}^-$
(the leading order approximation to $\psi(S_{k+j})$)
for several values of $j$.
\label{fig:codimTwoHCCornerSchem_e}
}
\end{center}
\end{figure}

\begin{remark}
For each of the examples in \S\ref{sec:examples} the $S_k$ involve three smooth boundaries.
To prove that this is the case in general,
which has not been achieved by Theorem \ref{th:main},
we must precisely characterise the bifurcations to which each boundary corresponds.
While the boundary approximated by $\nu = \eta + (\sigma-1) \sigma^{-k}$
is always where the $\cX^k \cY$-cycle loses stability by attaining an eigenvalue of $-1$,
a characterisation of the other boundaries depends on whether $f$ is discontinuous or continuous.
In both cases these boundaries are border-collision bifurcations
where one point of the $\cX^k \cY$-cycle collides with $\Sigma$.
If $f$ is discontinuous, these collisions occur near $Z$.
If $f$ is continuous, $W^u(\Gamma)$ is continuous as so
must have at least one other intersection with $\Sigma$ \cite{SiTu17}.
In this case one of the two collisions occurs near this other intersection
in accordance with the unfolding of a {\em shrinking point} \cite{Si17c,SiMe09}.
\end{remark}

\begin{remark}
The assumption $\cX_0 \ne \cY_0$ is a natural consequence of $Z \in \Sigma$ and
allows us to prove $S_k \subset \psi^{-1} \left( \Delta_{k,\gamma}^+ \right)$ in a simple way.
In practise we can always impose $\cX_0 \ne \cY_0$ by choosing an appropriate cyclic permutation of $\cX$.
Similarly the assumption $\hat{P}^{(k)}_{k n} \in \cM$
ensures $S_k$ does not include other points near $(\xi,\zeta) = (0,0)$ due to a peculiar global arrangement
of the regions $\Omega_\cX$ and $\Omega_\cY$.
\end{remark}

\begin{remark}
The assumption $\cL \left( Z, f_\cX(Z) \right) \subset {\rm int}(\Omega_\cY)$
is relatively strong as it refers to almost all orbits in the homoclinic connection.
This `global' assumption is used to ensure that the $\cX^k \cY$-cycles are admissible
and was circumvented for a similar situation in \cite{SiTu17}
by having detailed knowledge of one homoclinic orbit and its relation to $\Sigma$.
\end{remark}

\begin{remark}
The leading order terms in our
large $k$ asymptotic expansions (see \S\ref{sec:proof})
are $\cO \left( \sigma^{-k} \right)$ (this is big-$O$ notation).
Second order terms are either $\cO \left( \lambda^k \right)$ or $\cO \left( \sigma^{-2 k} \right)$.
We have designed $\gamma$ so that $\gamma^k$ is smaller than leading order
(so that the relative error in $S_k$ decreases with $k$)
but bigger than second order (so that higher order terms can be ignored).
\end{remark}

\begin{proposition}
Let $j \ge 1$.
If the conditions of Theorem \ref{th:main} hold then,
for sufficiently large values of $k$,
$S_k \cap S_{k+j} = \varnothing$ if and only if $j > 1$.
\label{pr:overlap}
\end{proposition}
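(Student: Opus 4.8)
The plan is to derive everything from Theorem~\ref{th:main}. Since $\det(J)\ne0$ we may, after shrinking $\cN$, assume $\psi$ restricts to a diffeomorphism of $\cN$ onto its image, so that $\psi^{-1}$ is a well-defined function there; and Theorem~\ref{th:main} gives $\psi^{-1}(\Delta_{k,\gamma}^-)\subset S_k\subset\psi^{-1}(\Delta_{k,\gamma}^+)$ for all $k\ge k_{\rm min}$. Working within $\cN$ it therefore suffices to show, for large $k$, that (a) $\Delta_{k,\gamma}^-\cap\Delta_{k+1,\gamma}^-\ne\varnothing$, and (b) $\Delta_{k,\gamma}^+\cap\Delta_{k+j,\gamma}^+=\varnothing$ for each fixed $j\ge2$. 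Indeed, under (a) any point of the intersection (which for large $k$ lies in $\psi(\cN)$) has a $\psi^{-1}$-image in $S_k\cap S_{k+1}$, while under (b) the sets $S_k\subset\psi^{-1}(\Delta_{k,\gamma}^+)$ and $S_{k+j}\subset\psi^{-1}(\Delta_{k+j,\gamma}^+)$ are disjoint. Note it is the inner regions $\Delta^-$ that are needed for the overlap and the outer regions $\Delta^+$ for the separation; pairing these correctly is the only delicate point of the reduction.

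The regions $\Delta_{k,\gamma}^\pm$ are self-similar to leading order. Scaling by $\sigma^k$, that is, applying $(\eta,\nu)\mapsto\sigma^k(\eta,\nu)$, carries $\Delta_{k,\gamma}^\pm$ onto the fixed triangle
\begin{equation}
\Delta_* = \left\{ (\tilde\eta,\tilde\nu) \,\middle|\, \tilde\eta < \sigma,\ \tilde\nu > 1,\ \tilde\nu < \tilde\eta + \sigma - 1 \right\}
\nonumber
\end{equation}
with each of its three bounding lines displaced by $\pm(\sigma\gamma)^k$, and carries $\Delta_{k+j,\gamma}^\pm$ onto $\sigma^{-j}\Delta_*$ with each bounding line displaced by $\pm\gamma^j(\sigma\gamma)^k$. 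The hypothesis $|\lambda|\sigma<1$ yields $\gamma<\frac{1}{\sigma}$ (checking the two entries of the maximum: $\sigma^{-3/2}<\sigma^{-1}$ since $\sigma>1$, while $|\lambda|^{1/2}\sigma^{-1/2}<\sigma^{-1}$ is exactly $|\lambda|\sigma<1$), so $(\sigma\gamma)^k\to0$; thus, for each fixed $j$, the rescaled regions converge in Hausdorff distance to $\Delta_*$ and $\sigma^{-j}\Delta_*$, which are bounded because $\sigma>1$.

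Claim (a) reduces to the elementary fact that $\Delta_*$ and $\sigma^{-1}\Delta_*$ overlap in a set with nonempty interior: for instance, the point $(1-\varepsilon,1+\varepsilon)$ lies in the interior of both whenever $0<\varepsilon<\frac{\sigma-1}{2\sigma}$, and since the rescaled regions $\sigma^k\Delta_{k,\gamma}^-$ and $\sigma^k\Delta_{k+1,\gamma}^-$ differ from $\Delta_*$ and $\sigma^{-1}\Delta_*$ only by displacements that are $\cO\!\left((\sigma\gamma)^k\right)\to0$, the point $\sigma^{-k}(1-\varepsilon,1+\varepsilon)$ lies in $\Delta_{k,\gamma}^-\cap\Delta_{k+1,\gamma}^-$ for all large $k$. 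Claim (b) reduces to $\overline{\Delta_*}\cap\overline{\sigma^{-j}\Delta_*}=\varnothing$ for $j\ge2$: a common point would satisfy $1\le\tilde\nu\le\tilde\eta+(\sigma-1)\sigma^{-j}\le\sigma^{1-j}+(\sigma-1)\sigma^{-j}=(2\sigma-1)\sigma^{-j}$, forcing $\sigma^j\le2\sigma-1$, which contradicts $\sigma^j\ge\sigma^2>2\sigma-1$ for $\sigma>1$. Being disjoint compact sets, $\overline{\Delta_*}$ and $\overline{\sigma^{-j}\Delta_*}$ lie at a positive distance $d_j$, so once the displacements $(\sigma\gamma)^k$ and $\gamma^j(\sigma\gamma)^k$ are both below $\frac{1}{2}d_j$ the rescaled fattened copies are disjoint, whence $\Delta_{k,\gamma}^+\cap\Delta_{k+j,\gamma}^+=\varnothing$. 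Taking $k$ large enough for these estimates (and $k\ge k_{\rm min}$) proves both halves.

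I do not expect a serious obstacle beyond Theorem~\ref{th:main} itself: the remaining content is just the two triangle computations above. The points that need care are matching the inner/outer bounds $\Delta^\mp$ to the two halves of the equivalence, respecting the quantifier order (the threshold on $k$ is allowed to depend on the already-fixed $j$), and checking that the shrinking $\cO\!\left((\sigma\gamma)^k\right)$ corrections — which is where $\gamma\sigma<1$ is used — are, after rescaling by $\sigma^k$, dominated by the $\cO(1)$ overlap in (a) and the $\cO(1)$ gap in (b).
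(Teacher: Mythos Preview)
Your proof is correct and follows essentially the same approach as the paper: both reduce to showing that the limit triangles $\Delta_{k,0}^\pm$ (equivalently your rescaled $\Delta_*$ and $\sigma^{-j}\Delta_*$) overlap for $j=1$ and are disjoint for $j\ge2$, and both hinge on the same inequality $\sigma^j\ge\sigma^2>2\sigma-1$ for the separation. The paper checks this by locating the top-right corner of $\Delta_{k+j,0}^+$ relative to $\Delta_{k,0}^+$, whereas you pick an interior point for the overlap and use a compactness/positive-distance argument for the separation; your explicit rescaling by $\sigma^k$ and the observation that the boundary displacements are $(\sigma\gamma)^k\to0$ makes the passage from the $\gamma=0$ triangles to the actual $\Delta_{k,\gamma}^\pm$ cleaner than the paper's one-line appeal to ``$\gamma^k\to0$ faster than $\sigma^{-k}$''. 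You are also more explicit than the paper about needing the inner bounds $\Delta^-$ for the overlap direction and the outer bounds $\Delta^+$ for the separation direction.
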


\begin{proof}
The top-right corner of $\Delta_{k+1,0}^+$ is
$(\eta,\nu) = \left( \sigma^{-k}, \frac{2 \sigma - 1}{\sigma} \halfOfACommaSpace \sigma^{-k} \right)$.
By checking all three inequalities in \eqref{eq:Delta}
it can easily be seen that this corner belongs to ${\rm int} \left( \Delta_{k,0}^+ \right)$ because $\sigma > 1$.
Thus ${\rm int} \left( \Delta_{k,0}^+ \right) \cap \Delta_{k+1,0}^+ \ne \varnothing$,
and so $S_k \cap S_{k+1} \ne \varnothing$ (using Theorem \ref{th:main})
for sufficiently large values of $k$ because $\gamma^k \to 0$ faster than $\sigma^{-k} \to 0$.

The top-right corner of $\Delta_{k+2,0}^+$ is
$(\eta,\nu) = \left( \sigma^{-(k+1)}, \frac{2 \sigma - 1}{\sigma^2} \halfOfACommaSpace \sigma^{-k} \right)$
and lies below the bottom edge of $\Delta_{k,0}^+$ because
$\frac{2 \sigma - 1}{\sigma^2} = 1 - \frac{(\sigma-1)^2}{\sigma^2} < 1$.
This is also the case for any $\Delta_{k+j,0}^+$ with $j > 2$.
Thus, for all $j > 1$, we have $\Delta_{k,0}^+ \cap \Delta_{k+j,0}^+ = \varnothing$ 
and so $S_k \cap S_{k+j} \ne \varnothing$ (using Theorem \ref{th:main})
for sufficiently large values of $k$.
\end{proof}

\section{Proof of Theorem \ref{th:main}}
\label{sec:proof}
\setcounter{equation}{0}

The $\cX$-cycle exists, is unique, and has no points on $\Sigma$
in some compact neighbourhood $\cN$ of $(\xi,\zeta) = (0,0)$.
We can also assume that in $\cN$ the eigenvalues satisfy $0 < |a| < 1 < b$,
that $Z$ is well-defined and varies smoothly,
and that $h$ and $\psi$ are well-defined and invertible.

\subsection{Change of variables}

We write the change of variables \eqref{eq:coordinateChangeVariables} as $(u,v) = h(x,y)$.
For any word $\cW$, let $g_\cW = h \circ f_\cW \circ h^{-1}$
denote $f_\cW$ in $(u,v)$-coordinates, and write $g_\cW = (g_{\cW,1},g_{\cW,2})$.
Then
\begin{equation}
g_\cX(x,y) = \begin{bmatrix} a u \\ b v \end{bmatrix},
\label{eq:gX}
\end{equation}
and write
\begin{equation}
g_\cY(u,v) = \begin{bmatrix} c_{11} u + c_{12} v + c_{13} \\ c_{21} u + c_{22} v + c_{23} \end{bmatrix},
\label{eq:gY}
\end{equation}
for some parameter-dependent coefficients $c_{11},\ldots,c_{23} \in \mathbb{R}$.

\begin{figure}[b!]
\begin{center}
\setlength{\unitlength}{1cm}
\begin{picture}(8,6)
\put(0,0){\includegraphics[height=6cm]{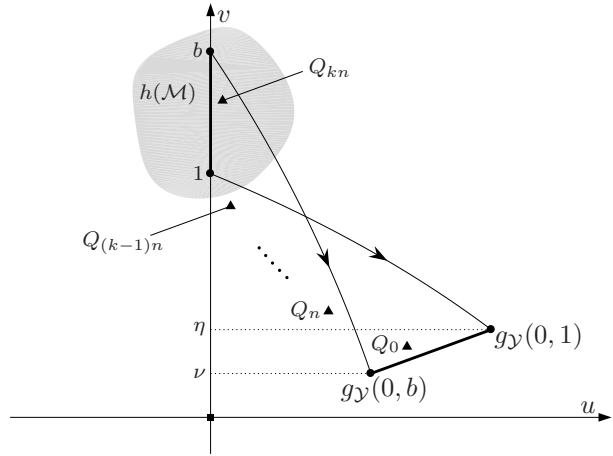}}
\put(7.58,.55){\footnotesize $u$}
\put(2.75,5.75){\footnotesize $v$}
\put(2.44,1.01){\scriptsize $\nu$}
\put(2.44,1.63){\scriptsize $\eta$}
\put(2.43,3.64){\scriptsize $1$}
\put(2.43,5.29){\scriptsize $b$}
\put(4.4,.77){\footnotesize $g_\cY(0,b)$}
\put(6.44,1.48){\footnotesize $g_\cY(0,1)$}
\put(1.72,4.72){\scriptsize $h(\cM)$}
\put(4.8,1.39){\scriptsize $Q_0$}
\put(3.73,1.84){\scriptsize $Q_n$}
\put(.96,2.77){\scriptsize $Q_{(k-1)n}$}
\put(3.97,5.07){\scriptsize $Q_{k n}$}
\end{picture}
\caption{
Phase space in $(u,v)$-coordinates, where $(u,v) = h(x,y)$, near a subsumed homoclinic connection.
Some points of an $\cX^k \cY$-cycle are shown.
For each $j \in \{ 0,\ldots,k-1 \}$, $Q_{j n}$ maps to $Q_{(j+1)n}$ under $g_\cX$, while
$Q_{k n}$ maps to $Q_0$ under $g_\cY$.
Notice $h(\cM)$ is a neighbourhood of the interval $(1,b)$ of the $v$-axis.
\label{fig:codimTwoHCCornerSchem_g}
}
\end{center}
\end{figure}

By the definition of $q_{\rm unstab}$, 
we have $h(Z) = (0,1)$ and $h \left( f_\cX(Z) \right) = (0,b)$.
Then \eqref{eq:eta} and \eqref{eq:nu} imply
$\eta = g_{\cY,2}(0,b)$ and $\nu = g_{\cY,2}(0,1)$, see Fig.~\ref{fig:codimTwoHCCornerSchem_g}, and so
\begin{align}
c_{22} &= \frac{\eta - \nu}{b - 1},
\label{eq:c22} \\
c_{23} &= \frac{-\eta + b \nu}{b - 1}.
\label{eq:c23}
\end{align}
Formulas for the other coefficients in \eqref{eq:gY} will not be needed.

\subsection{$\cX^k \cY$-cycles}

By composing \eqref{eq:gX} and \eqref{eq:gY} we obtain
\begin{equation}
g_{\cX^k \cY}(u,v) =
\begin{bmatrix}
a^k c_{11} u + b^k c_{12} v + c_{13} \\
a^k c_{21} u + b^k c_{22} v + c_{23}
\end{bmatrix}.
\label{eq:gXkY}
\end{equation}
The trace and determinant of $\rD g_{\cX^k \cY}$ are therefore
\begin{align}
\tau_k &= a^k c_{11} + b^k c_{22} \,,
\label{eq:tauk} \\
\delta_k &= a^k b^k \left( c_{11} c_{22} - c_{12} c_{21} \right),
\label{eq:deltak}
\end{align}
respectively.
By Lemma \ref{le:existence}, there exists a unique $\cX^k \cY$-cycle
if $\det \left( I - \rD f_{\cX^k \cY} \right) \ne 0$.
This is equivalent to $\delta_k - \tau_k + 1 \ne 0$, because
$\rD f_{\cX^k \cY}$ and $\rD g_{\cX^k \cY}$ are similar
and $\det \left( I - \rD g_{\cX^k \cY} \right) = \delta_k - \tau_k + 1$.

For each $i$, let $Q_i = h \left( \hat{P}^{(k)}_i \right)$
(in $Q_i$ the $k$-dependence is suppressed for brevity).
Notice $Q_0$ is a fixed point of $g_{\cX^k \cY}$.
By calculating this point via \eqref{eq:gXkY} and iterating it $k$ times under $g_\cX$ we obtain
\begin{equation}
Q_{k n} = \frac{1}{\delta_k - \tau_k + 1}
\begin{bmatrix}
a^k c_{13} - a^k b^k \left( c_{22} c_{13} - c_{12} c_{23} \right) \\
b^k c_{23} + a^k b^k \left( c_{21} c_{13} - c_{11} c_{23} \right)
\end{bmatrix},
\label{eq:Qkkn}
\end{equation}
assuming $\delta_k - \tau_k + 1 \ne 0$.
We also write $Q_{k n} = \left( u^{(k)}, v^{(k)} \right)$.

\subsection{Accommodating relatively large values of $\eta$ and $\nu$}

In \eqref{eq:tauk}--\eqref{eq:Qkkn}, the largest terms are 
$b^k c_{22}$ in \eqref{eq:tauk} and $b^k c_{23}$ in \eqref{eq:Qkkn}
because $b > 1$ and $k$ is large.
But in $S_k$ we must have $|\tau_k| < 2$ (for stability)
and $v^{(k)}$ be not too large (for admissibility).
Intuitively, \eqref{eq:c22}--\eqref{eq:c23}
imply that $\eta$ and $\nu$ need to be $\cO \left( \sigma^{-k} \right)$ throughout $S_k$.
Here we establish this formally (although details are omitted for brevity).

We partition $\psi(\cN)$ into three components using some values $R_1, R_2 > 0$ as follows.
Let $\cN_1$ be all $(\eta,\nu) \in \psi(\cN)$ for which $|\eta - \nu| > R_1 \sigma^{-k}$,
let $\cN_2$ be all $(\eta,\nu) \in \psi(\cN)$ for which $|\eta - \nu| \le R_1 \sigma^{-k}$ and $|\eta| > R_2 \sigma^{-k}$,
and let $\cN_3$ be all other points in $\psi(\cN)$.
From \eqref{eq:c22} and \eqref{eq:tauk} we can find $R_1 > 0$
such that throughout $\cN_1$ we have $|\tau_k| > 2$, for all sufficiently large values of $k$,
and so, if it exists, the $\cX^k \cY$-cycle is unstable.
Such an $R_1$ can be constructed from values 
$b_{\rm min}, b_{\rm max} > 1$ for which $b_{\rm min} \le b \le b_{\rm max}$ throughout $\psi(\cN)$.
Similarly we can find $R_2 > 0$ such that throughout $\cN_2$ we have $|v^{(k)}| > b+1$, say,
for all sufficiently large values of $k$,
and so $Q_{k n} \notin h(\cM)$, assuming $\cM$ is sufficiently small.


Consequently it remains for us to consider $(\eta,\nu) \in \cN_3$
for which we may treat $\tilde{\eta} = \sigma^k \eta$
and $\tilde{\nu} = \sigma^k \nu$ as order-$1$ quantities.

\subsection{Existence}

From \eqref{eq:c22} and \eqref{eq:tauk}--\eqref{eq:deltak} we have
\begin{align*}
\tau_k &= \frac{\tilde{\eta} - \tilde{\nu}}{\sigma - 1} + \cO \left( \sigma^{-k} \right), \\
\delta_k &= \cO \left( \lambda^k \sigma^k \right),
\end{align*}
and so
\begin{equation}
\delta_k - \tau_k + 1
= 1 - \frac{\tilde{\eta} - \tilde{\nu}}{\sigma - 1}
+ \cO \left( \sigma^{-k} \right)
+ \cO \left( \lambda^k \sigma^k \right).
\label{eq:sn}
\end{equation}
By substituting $\tilde{\nu} > \tilde{\eta} - \sigma + 1 + 2 \gamma^k \sigma^k$
(which holds for all points in $\Delta_{k,\gamma}^-$)
into \eqref{eq:sn}, we obtain
$\delta_k - \tau_k + 1 > \frac{2 \sigma^k \gamma^k}{\sigma - 1}
+ \cO \left( \sigma^{-k} \right)
+ \cO \left( \lambda^k \sigma^k \right)$,
which is positive for sufficiently large values of $k$.
Thus the $\cX^k \cY$-cycle exists and is unique in $\Delta_{k,\gamma}^-$ (for large $k$).

Similarly by substituting $\tilde{\nu} < \tilde{\eta} - \sigma + 1 - 2 \gamma^k \sigma^k$ into \eqref{eq:sn}
we obtain $\delta_k - \tau_k + 1 < 0$, for large $k$,
and so here the $\cX^k \cY$-cycle cannot be stable.

\subsection{Stability}

By substituting $\tilde{\nu} < \tilde{\eta} + \sigma - 1 - \gamma^k \sigma^k$
(i.e.~below the angled edge of $\Delta_{k,\gamma}^-$) into
\begin{equation}
\delta_k + \tau_k + 1
= 1 + \frac{\tilde{\eta} - \tilde{\nu}}{\sigma - 1}
+ \cO \left( \sigma^{-k} \right)
+ \cO \left( \lambda^k \sigma^k \right),
\label{eq:pd}
\end{equation}
we obtain
$\delta_k + \tau_k + 1 > \frac{\sigma^k \gamma^k}{\sigma - 1}
+ \cO \left( \sigma^{-k} \right)
+ \cO \left( \lambda^k \sigma^k \right)$,
which is positive for sufficiently large values of $k$.
Since also $\delta_k \to 0$ as $k \to \infty$,
we conclude that in $\Delta_{k,\gamma}^-$ the pair $(\tau_k,\delta_k)$ belongs to the
triangle of Fig.~\ref{fig:codimTwoHCCornerSchem_c}
and so if the $\cX^k \cY$-cycle is admissible then it is also stable.

By similarly substituting $\tilde{\nu} > \tilde{\eta} + \sigma - 1 + \gamma^k \sigma^k$ into \eqref{eq:pd}
we obtain $\delta_k + \tau_k + 1 < 0$, for large $k$,
and so here the $\cX^k \cY$-cycle cannot be stable.

\subsection{Admissibility}

From \eqref{eq:c23} and \eqref{eq:Qkkn} we have $u^{(k)} = \cO \left( \lambda^k \right)$ and
\begin{equation}
v^{(k)} = v^{(\infty)}
+ \cO \left( \sigma^{-k} \right)
+ \cO \left( \lambda^k \sigma^k \right),
\label{eq:vk}
\end{equation}
where
\begin{equation}
v^{(\infty)} = \frac{-\tilde{\eta} + \sigma \tilde{\nu}}{\sigma - 1 - \tilde{\eta} + \tilde{\nu}}.
\label{eq:vinfty}
\end{equation}
If $\tilde{\eta}$ and $\tilde{\nu}$ are such that $1 < v^{(\infty)} < \sigma$,
then Assumption \ref{as:main} implies $Q_{k n} \in \Omega_\cY$
(also $Q_{k n} \in h(\cM)$) for large values of $k$.
Also for each $j = 0,\ldots,k-1$ the point $Q_{j n}$
is in $\cB$ or close to $\cL(P_0,Z)$, for large values of $k$,
and so $Q_{j n} \in \Omega_\cX$.
In this case the $\cX^k \cY$-cycle is admissible and $(\eta,\nu) \in \psi(S_k)$.

If instead $v^{(\infty)} < 1$ then for sufficiently large values of $k$ either
$Q_{k n} \in \Omega_\cX$
(in which case the $\cX^k \cY$-cycle is virtual because $\cX_0 \ne \cY_0$)
or $Q_{k n} \notin h(\cM)$.
In either case $(\eta,\nu) \notin \psi(S_k)$.
If $v^{(\infty)} > \sigma$, then either $Q_{(k-1)n} \in \Omega_\cY$
or $Q_{k n} \notin h(\cM)$ and we reach the same conclusion.

Simply solving $v^{(\infty)} = 1$ and $v^{(\infty)} = \sigma$ yields
$\tilde{\nu} = 1$ and $\tilde{\eta} = \sigma$, respectively,
and this provides a rough explanation for the 
vertical and horizontal edges of the $\Delta_{k,\gamma}^\pm$.
More formally it is a simple exercise to use the formula \eqref{eq:vinfty}
to show that in $\Delta_{k,\gamma}^-$ we have
\begin{equation}
1 + \tfrac{1}{2} \halfOfACommaSpace \beta^k \sigma^k
+ \cO \left( \sigma^{-k} \right)
+ \cO \left( \lambda^k \sigma^k \right) <
v^{(\infty)} < \sigma - \tfrac{1}{2} \halfOfACommaSpace \beta^k \sigma^k
+ \cO \left( \sigma^{-k} \right)
+ \cO \left( \lambda^k \sigma^k \right),
\nonumber
\end{equation}
and so in $\Delta_{k,\gamma}^-$ the $\cX^k \cY$ is admissible
with $Q_{k n} \in h(\cM)$ for sufficiently large values of $k$.
Also if $\tilde{\nu} \ge \tilde{\eta} - \sigma + 1 - 2 \gamma^k \sigma^k$
and $\tilde{\nu} \le \tilde{\eta} + \sigma - 1 + \gamma^k \sigma^k$
(above it was shown that these are necessary for the $\cX^k \cY$-cycle to be stable),
then outside $\Delta_{k,\gamma}^+$ we have either
$v^{(\infty)} < 1 - \tfrac{1}{2} \halfOfACommaSpace \beta^k \sigma^k
+ \cO \left( \sigma^{-k} \right)
+ \cO \left( \lambda^k \sigma^k \right)$ or
$v^{(\infty)} > \sigma + \tfrac{1}{2} \halfOfACommaSpace \beta^k \sigma^k
+ \cO \left( \sigma^{-k} \right)
+ \cO \left( \lambda^k \sigma^k \right)$
and so the $\cX^k \cY$-cycle is either virtual or $Q_{k n} \notin h(\cM)$.
\manualEndProof

\section{Examples}
\label{sec:examples}
\setcounter{equation}{0}

\subsection{A discontinuous map}

Here we consider the discontinuous map of \cite{Mi13}:
\begin{equation}
f(x,y) = \begin{cases}
f_1(x,y), & x < 0, \\
f_2(x,y), & x \ge 0,
\end{cases}
\label{eq:fMi13}
\end{equation}
where
\begin{align}
f_1(x,y) &= \begin{bmatrix} \bar{\lambda}_1 x + y + \bar{a} \\ \bar{\beta} x \end{bmatrix}, \\
f_2(x,y) &= \begin{bmatrix} \bar{\lambda}_2 x + y + \bar{b} \\ \bar{\beta} x \end{bmatrix}.
\nonumber
\end{align}
Bars have been added to the parameters
to avoid confusion with the notation that has already been developed.
As in Figure 7 of \cite{Mi13} we fix
\begin{align}
\bar{b} &= -2, &
\bar{\lambda}_2 &= 1.5, &
\bar{\beta} &= 0.3,
\label{eq:ParamExDisc}
\end{align}
and vary the remaining two parameters.

Fig.~\ref{fig:codimTwoHCCornerTonguesExDisc} shows a two-parameter bifurcation diagram.
Regions where there exists a stable periodic solution (up to period $30$) are coloured by period.
The two white lines are curves of codimension-one homoclinic corners.
The intersection of these curves is a codimension-two point at which
a fixed point $P_0$ in $x > 0$ has
eigenvalues $\lambda \approx -0.18$ and $\sigma \approx 1.68$
and a subsumed homoclinic connection.
Fig.~\ref{fig:codimTwoHCCornerQQExDisc} shows a phase portrait
at sample parameter values near the codimension-two point.

The codimension-two point satisfies the conditions of Theorem \ref{th:main}.
Here $\cX = 2$ and $\cY = 1$.
The point $Z$ lies at a transverse intersection between
$E^u(P_0)$ and the switching manifold $x=0$,
and condition (iii) of Assumption \ref{as:main} can be verified by inspection.
Also $\det(J) \ne 0$ is a consequence of the transversal intersection
between the curves of homoclinic corners.

The regions $S_k$ predicted by Theorem \ref{th:main}
are labelled in Fig.~\ref{fig:codimTwoHCCornerTonguesExDisc} up to $k = 6$
and visible for some larger values of $k$
(other coloured regions correspond to multi-round periodic solutions).
The phase portrait in Fig.~\ref{fig:codimTwoHCCornerQQExDisc} corresponds to
parameter values in $S_5 \cap S_6$
and the corresponding stable $2^5 1$ and $2^6 1$-cycles are plotted.

\begin{figure}[b!]
\begin{center}
\setlength{\unitlength}{1cm}
\begin{picture}(12,6)
\put(0,0){\includegraphics[height=6cm]{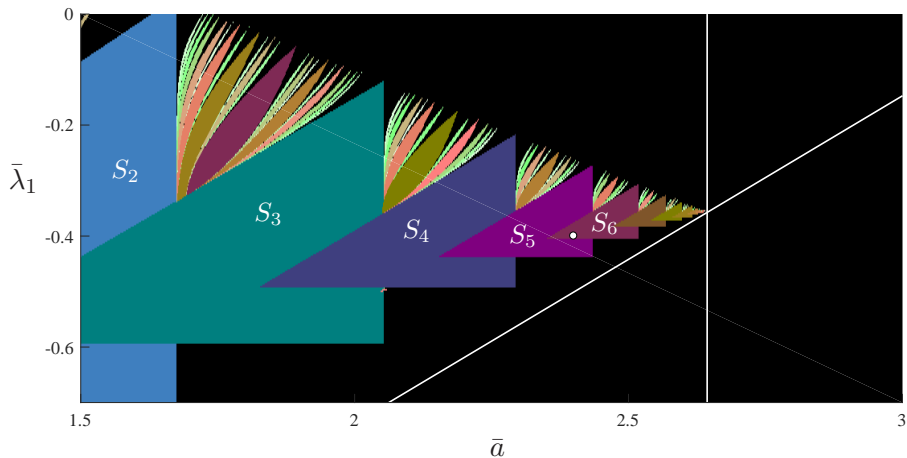}}
\put(6.4,0){\small $\bar{a}$}
\put(0,3.56){\small $\bar{\lambda}_1$}
\put(1.37,3.7){\footnotesize \color{white} $S_2$}
\put(3.27,3.13){\footnotesize \color{white} $S_3$}
\put(5.24,2.9){\footnotesize \color{white} $S_4$}
\put(6.65,2.84){\footnotesize \color{white} $S_5$}
\put(7.74,3){\footnotesize \color{white} $S_6$}
\end{picture}
\caption{
Regions where the discontinuous map
\eqref{eq:fMi13} with \eqref{eq:ParamExDisc}
has a stable periodic solution (different colours correspond to different periods).
This was computed numerically via a brute-force search on a $1024 \times 256$ grid of
$\left( \bar{a},\bar{\lambda}_1 \right)$-values up to period $30$.
At points where multiple stable periodic solutions were found the highest period is indicated.
The white lines are curves of homoclinic corners that intersect at a subsumed homoclinic connection.
The white circle indicates the parameter values of Fig.~\ref{fig:codimTwoHCCornerQQExDisc}.
\label{fig:codimTwoHCCornerTonguesExDisc}
}
\end{center}
\end{figure}

\begin{figure}[t!]
\begin{center}
\setlength{\unitlength}{1cm}
\begin{picture}(8,6)
\put(0,0){\includegraphics[height=6cm]{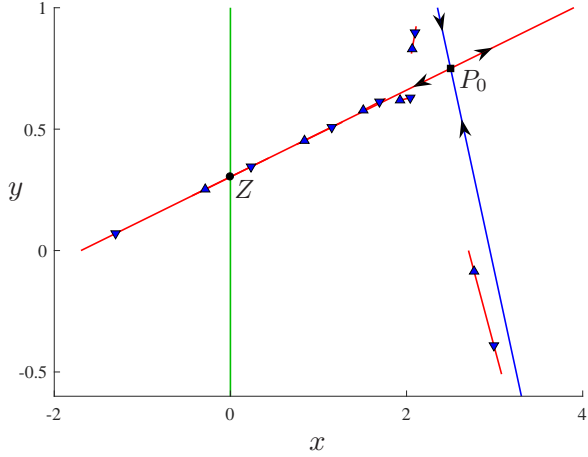}}
\put(4,0){\small $x$}
\put(0,3.39){\small $y$}
\put(6,4.82){\footnotesize $P_0$}
\put(3.01,3.35){\footnotesize $Z$}
\end{picture}
\caption{
A phase portrait of the discontinuous map
\eqref{eq:fMi13} with \eqref{eq:ParamExDisc} and $\left( \bar{a}, \bar{\lambda}_1 \right) = (2.4,-0.4)$.
Parts of the stable and unstable manifolds of the fixed point $P_0$ are
coloured blue and red, respectively.
The intersection $Z$ (defined in \S\ref{sec:thm}) of one branch of $W^u(P_0)$ with $\Sigma$ ($x=0$) is indicated.
The stable $2^5 1$ and $2^6 1$-cycles are shown with two sets of triangles of different orientations.
\label{fig:codimTwoHCCornerQQExDisc}
}
\end{center}
\end{figure}

\subsection{The border-collision normal form}

The continuous map
\begin{equation}
f(x,y) = \begin{cases}
f_1(x,y), & x \le 0, \\
f_2(x,y), & x \ge 0,
\end{cases}
\label{eq:bcnf}
\end{equation}
where
\begin{align}
f_1(x,y) &= \begin{bmatrix} \tau_1 x + y + 1 \\ -\delta_1 x \end{bmatrix}, \\
f_2(x,y) &= \begin{bmatrix} \tau_2 x + y + 1 \\ -\delta_2 x \end{bmatrix}.
\nonumber
\end{align}
is known as the two-dimensional border-collision normal form
(except the border-collision bifurcation parameter, usually called $\mu$,
has been scaled to $1$) \cite{NuYo92,Si16}.
Fig.~\ref{fig:codimTwoHCCornerTonguesExCont} shows a two-parameter bifurcation diagram using
\begin{align}
\tau_1 &= 2, &
\delta_1 &= 0.75.
\label{eq:ParamExCont}
\end{align}
Four distinct curves of homoclinic corners emanate from a common intersection point at $(\delta_2,\tau_2) = (1.5,-0.5)$.
Here a fixed point $P_0$ in $x < 0$
has eigenvalues $\lambda = 0.5$ and $\sigma = 1.5$ and a subsumed homoclinic connection.
The conditions of Theorem \ref{th:main} can be readily verified
with $\cX = 1$ and $\cY = 22$,
and the regions $S_k$ are shown in Fig.~\ref{fig:codimTwoHCCornerTonguesExCont}.
Fig.~\ref{fig:codimTwoHCCornerQQExCont} shows a phase portrait at parameter values
relatively near the codimension-two point.

\begin{figure}[t!]
\begin{center}
\setlength{\unitlength}{1cm}
\begin{picture}(12,6)
\put(0,0){\includegraphics[height=6cm]{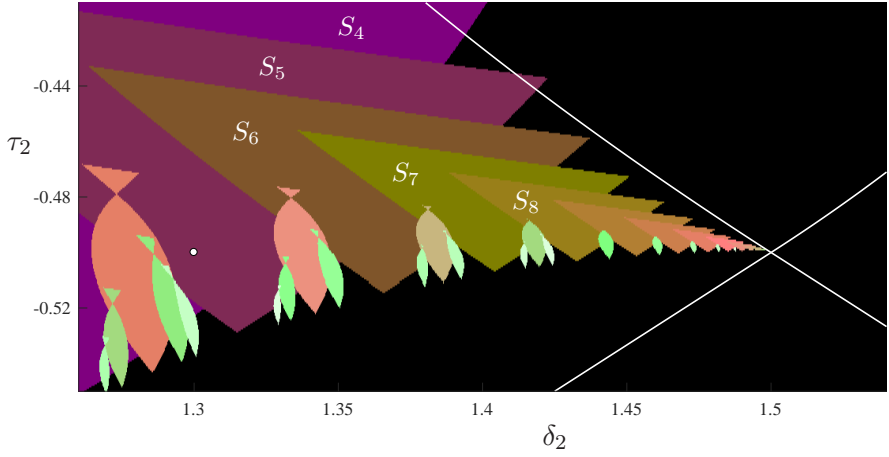}}
\put(7.12,0){\small $\delta_2$}
\put(0,3.93){\small $\tau_2$}
\put(4.4,5.45){\footnotesize \color{white} $S_4$}
\put(3.35,4.92){\footnotesize \color{white} $S_5$}
\put(3,4.06){\footnotesize \color{white} $S_6$}
\put(5.12,3.51){\footnotesize \color{white} $S_7$}
\put(6.72,3.14){\footnotesize \color{white} $S_8$}
\end{picture}
\caption{
A two-parameter bifurcation diagram of \eqref{eq:bcnf} with \eqref{eq:ParamExCont}
using the same conventions as Fig.~\ref{fig:codimTwoHCCornerTonguesExDisc}.
The white circle indicates the parameter values of Fig.~\ref{fig:codimTwoHCCornerQQExCont}.
\label{fig:codimTwoHCCornerTonguesExCont}
}
\end{center}
\end{figure}

\begin{figure}[t!]
\begin{center}
\setlength{\unitlength}{1cm}
\begin{picture}(8,6)
\put(0,0){\includegraphics[height=6cm]{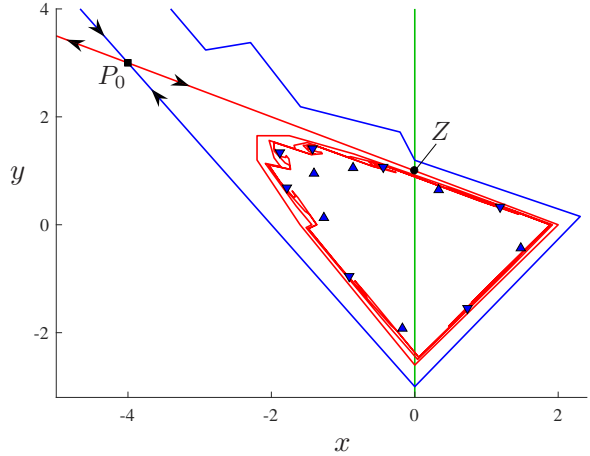}}
\put(4.31,0){\small $x$}
\put(0,3.62){\small $y$}
\put(1.17,4.88){\footnotesize $P_0$}
\put(5.59,4.14){\footnotesize $Z$}
\end{picture}
\caption{
A phase portrait of \eqref{eq:bcnf} with \eqref{eq:ParamExCont} and $(\delta_2,\tau_2) = (1.3,-0.5)$
using the same conventions as Fig.~\ref{fig:codimTwoHCCornerQQExDisc}.
The stable periodic solutions (shown with triangles of different orientations) are $1^4 2^2$ and $1^5 2^2$-cycles.
\label{fig:codimTwoHCCornerQQExCont}
}
\end{center}
\end{figure}

\subsection{An example with $n > 1$}

Finally we provide an example for which the $\cX$-cycle is not a fixed point.
Fig.~\ref{fig:codimTwoHCCornerTonguesExContPeriod3}
shows a different two-parameter slice of the four-dimensional parameter space
of the border-collision normal form \eqref{eq:bcnf}.
Specifically we fix
\begin{align}
\tau_2 &= -2.5, &
\delta_2 &= 2.
\label{eq:ParamExContPeriod3}
\end{align}
Four curves of homoclinic corners intersect at
$(\tau_1,\delta_1) = \left( -\frac{23}{33}, \frac{13}{66} \right)$.
At this point \eqref{eq:bcnf} has an $\cX$-cycle, where $\cX = 212$,
with eigenvalues $\lambda = \frac{4}{11}$ and $\sigma = \frac{13}{6}$
and a subsumed homoclinic connection.
Fig.~\ref{fig:codimTwoHCCornerQQExContPeriod3} shows a phase portrait
at nearby parameter values.
Here the conditions of Theorem \ref{th:main} are satisfied
using $\cY = 12$ (the cyclic permutation of $\cX$ has been chosen so that $\cX_0 = \cY_0$).
Again the regions $S_k$
are in accordance with Theorem \ref{th:main}, at least for sufficiently large values of $k$
(the region $S_4$ has an extra component corresponding to stable $(212)^4 22$-cycles).

\begin{figure}[t!]
\begin{center}
\setlength{\unitlength}{1cm}
\begin{picture}(12,6)
\put(0,0){\includegraphics[height=6cm]{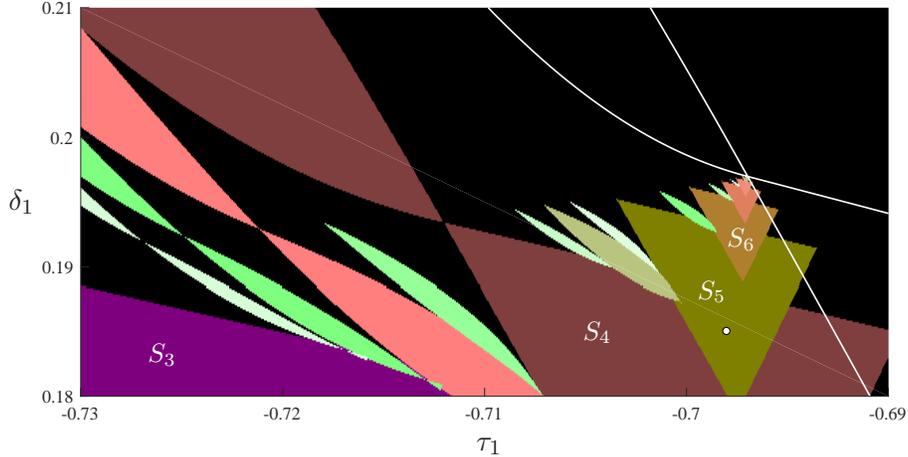}}
\put(6.22,0){\small $\tau_1$}
\put(0,3.2){\small $\delta_1$}
\put(1.85,1.17){\footnotesize \color{white} $S_3$}
\put(7.64,1.47){\footnotesize \color{white} $S_4$}
\put(9.16,2.01){\footnotesize \color{white} $S_5$}
\put(9.55,2.71){\footnotesize \color{white} $S_6$}
\end{picture}
\caption{
A two-parameter bifurcation diagram of \eqref{eq:bcnf} with \eqref{eq:ParamExContPeriod3}
using the same conventions as Fig.~\ref{fig:codimTwoHCCornerTonguesExDisc},
except periodic solutions have been computed up to period $50$.
The white circle indicates the parameter values of Fig.~\ref{fig:codimTwoHCCornerQQExContPeriod3}.
\label{fig:codimTwoHCCornerTonguesExContPeriod3}
}
\end{center}
\end{figure}

\begin{figure}[t!]
\begin{center}
\setlength{\unitlength}{1cm}
\begin{picture}(8,6)
\put(0,0){\includegraphics[height=6cm]{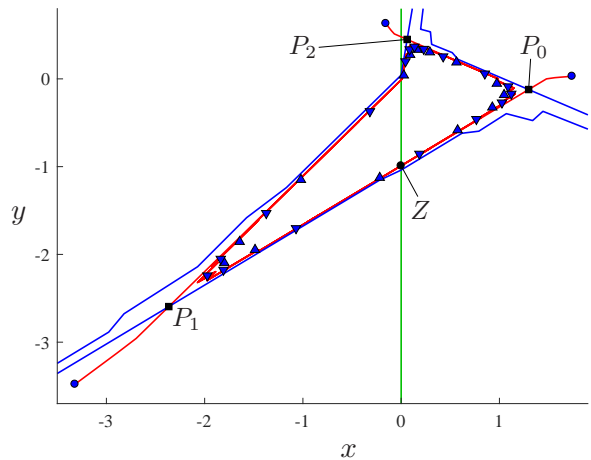}}
\put(4.38,0){\small $x$}
\put(0,3.17){\small $y$}
\put(6.76,5.37){\footnotesize $P_0$}
\put(2.14,1.75){\footnotesize $P_1$}
\put(3.69,5.36){\footnotesize $P_2$}
\put(5.31,3.17){\footnotesize $Z$}
\end{picture}
\caption{
A phase portrait of \eqref{eq:bcnf} with \eqref{eq:ParamExContPeriod3} and $(\tau_1,\delta_1) = (-0.698,0.185)$	
using the same conventions as Fig.~\ref{fig:codimTwoHCCornerQQExDisc}.
Stable $\cX^4 \cY$ and $\cX^5 \cY$-cycles are shown with triangles of different orientations.
There also exists a stable $211$-cycle shown with circles.
\label{fig:codimTwoHCCornerQQExContPeriod3}
}
\end{center}
\end{figure}

\section{Discussion}
\label{sec:conc}
\setcounter{equation}{0}

Stable $\cX^k \cY$-cycles do not exist near codimension-one homoclinic corners \cite{Si16b}.
This paper shows that near certain codimension-two homoclinic connections,
stable $\cX^k \cY$-cycles do exist in nearby regions $S_k$, Theorem \ref{th:main}.
To leading order $S_k$ is the set $\psi^{-1} \left( \Delta_{k,0}^+ \right)$
where $\Delta_{k,0}^+$ is triangular.
This explains, quantitatively, the bulk of the two-parameter bifurcation diagrams shown in
Figs.~\ref{fig:codimTwoHCCornerTonguesExDisc},
\ref{fig:codimTwoHCCornerTonguesExCont}, and \ref{fig:codimTwoHCCornerTonguesExContPeriod3}
for three different examples.
Each $S_k$ overlaps only $S_{k-1}$ and $S_{k+1}$ for large $k$, Proposition \ref{pr:overlap}.
In such overlaps there exist two stable periodic solutions,
as shown in Figs.~\ref{fig:codimTwoHCCornerQQExDisc}, \ref{fig:codimTwoHCCornerQQExCont},
and \ref{fig:codimTwoHCCornerQQExContPeriod3}.

Near the codimension-three scenario that we also have $\lambda \sigma = 1$,
for any $N \ge 1$ there exist open regions of parameter space
in which $N$ different regions $S_k$ overlap \cite{DoLa08,Si14b,Si14}.
For smooth area-preserving maps the analogous scenario is only codimension-two
because the condition $\lambda \sigma = 1$ is satisfied automatically \cite{GoGo09,GoSh05}.

It remains to generalise Theorem \ref{th:main} to higher dimensions, following \cite{SiTu17}.
It also remains to characterise multi-round periodic solutions.
Certainly from the above examples
regions where multi-round periodic solutions are admissible and stable
appear to exhibit a consistent structure nestled between the $S_k$.

\section{Acknowledgements}
\label{sec:ack}
\setcounter{equation}{0}

The author gratefully acknowledges the assistance of Edward (Zhiyang) Chen
regarding calculations of the boundaries of the $S_k$.


\begin{thebibliography}{10}

\bibitem{Da91}
G.J. Davis.
\newblock Infinitely many coexisting sinks from degenerate homoclinic
  tangencies.
\newblock {\em Trans. Am. Math. Soc.}, 323(2):727--748, 1991.

\bibitem{DoLa08}
Y.~Do and Y.-C. Lai.
\newblock Multistability and arithmetically period-adding bifurcations in
  piecewise smooth dynamical systems.
\newblock {\em Chaos}, 18:043107, 2008.

\bibitem{El08}
S.N. Elaydi.
\newblock {\em Discrete Chaos with Applications in Science and Engineering.}
\newblock Chapman and Hall., Boca Raton, FL, 2008.

\bibitem{GaSi72}
N.K. Gavrilov and L.P. \v{S}il'nikov.
\newblock On three-dimensional dynamical systems close to systems with a
  structurally unstable homoclinic curve {I}.
\newblock {\em Mat. USSR Sb.}, 17:467--485, 1972.

\bibitem{GoGo09}
M.S. Gonchenko and S.V. Gonchenko.
\newblock On cascades of elliptic periodic points in two-dimensional symplectic
  maps with homoclinic tangencies.
\newblock {\em Regul. Chaotic Dyns.}, 14(1):116--136, 2009.

\bibitem{GoTu07}
S.~Gonchenko, D.~Turaev, and L.~Shilnikov.
\newblock Homoclinic tangencies of arbitrarily high orders in conservative and
  dissipative two-dimensional maps.
\newblock {\em Nonlinearity}, 20:241--275, 2007.

\bibitem{GoSh05}
S.V. Gonchenko and L.P. Shilnikov.
\newblock On two-dimensional area-preserving maps with homoclinic tangencies
  that have infinitely many generic elliptic periodic points.
\newblock {\em J. Math. Sci.}, 128(2):2767--2773, 2005.

\bibitem{HaZh98}
B.~Hao and W.~Zheng.
\newblock {\em Applied Symbolic Dynamics and Chaos.}
\newblock World Scientific, Singapore, 1998.

\bibitem{HiLa95}
P.~Hirschberg and C.~Laing.
\newblock Successive homoclinic tangencies to a limit cycle.
\newblock {\em Phys. D}, 89:1--14, 1995.

\bibitem{Mi13}
C.~Mira.
\newblock Embedding of a dim1 piecewise continuous and linear {L}eonov map into
  a dim2 invertible map.
\newblock In G.I. Bischi, C.~Chiarella, and I.~Sushko, editors, {\em Global
  Analysis of Dynamic Models in Economics and Finance.}, pages 337--368.
  Springer, New York, 2013.

\bibitem{NuYo92}
H.E. Nusse and J.A. Yorke.
\newblock Border-collision bifurcations including ``period two to period
  three'' for piecewise smooth systems.
\newblock {\em Phys. D}, 57:39--57, 1992.

\bibitem{PaTa93}
J.~Palis and F.~Takens.
\newblock {\em Hyperbolicity and sensitive chaotic dynamics at homoclinic
  bifurcations.}
\newblock Cambridge University Press, New York, 1993.

\bibitem{Si14b}
D.J.W. Simpson.
\newblock Scaling laws for large numbers of coexisting attracting periodic
  solutions in the border-collision normal form.
\newblock {\em Int. J. Bifurcation Chaos}, 24(9):1450118, 2014.

\bibitem{Si14}
D.J.W. Simpson.
\newblock Sequences of periodic solutions and infinitely many coexisting
  attractors in the border-collision normal form.
\newblock {\em Int. J. Bifurcation Chaos}, 24(6):1430018, 2014.

\bibitem{Si16}
D.J.W. Simpson.
\newblock Border-collision bifurcations in $\mathbb{R}^n$.
\newblock {\em SIAM Rev.}, 58(2):177--226, 2016.

\bibitem{Si16b}
D.J.W. Simpson.
\newblock Unfolding homoclinic connections formed by corner intersections in
  piecewise-smooth maps.
\newblock {\em Chaos}, 26:073105, 2016.

\bibitem{Si17c}
D.J.W. Simpson.
\newblock The structure of mode-locking regions of piecewise-linear continuous
  maps: {I}. {N}earby mode-locking regions and shrinking points.
\newblock {\em Nonlinearity}, 30(1):382--444, 2017.

\bibitem{SiMe09}
D.J.W. Simpson and J.D. Meiss.
\newblock Shrinking point bifurcations of resonance tongues for
  piecewise-smooth, continuous maps.
\newblock {\em Nonlinearity}, 22(5):1123--1144, 2009.

\bibitem{SiTu17}
D.J.W. Simpson and C.P. Tuffley.
\newblock Subsumed homoclinic connections and infinitely many coexisting
  attractors in piecewise-linear continuous maps.
\newblock {\em Int. J. Bifurcation Chaos}, 27(2):1730010, 2017.

\end{thebibliography}

\end{document}